\documentclass{amsart}

\usepackage{color}
\makeatletter
 \def\LaTeX{\leavevmode L\raise.42ex
   \hbox{\kern-.3em\size{\sf@size}{0pt}\selectfont A}\kern-.15em\TeX}
\makeatother

\newcommand{\BibTeX}{{\rm B\kern-.05em{\sc
i\kern-.025emb}\kern-.08em\TeX}}
\newtheorem{corollary}{Corollary}[section]
\newtheorem{theorem}{Theorem}[section]
\newtheorem{lemma}[theorem]{Lemma}

\newtheorem{remark}[theorem]{Remark}
\newtheorem{assumption}[theorem]{Assumption}
\newtheorem{definition}[theorem]{Definition}

\newtheorem{counterexample}{Counterexample}

\numberwithin{equation}{section}

\newcommand{\sinc}{\mathrm{sinc}}

\def\EB{{\bf E}}

\begin{document}

\title[Reconstructing solutions of abstract differential equations ]{Reconstruction of Solutions to Abstract Schrödinger, Diffusion, and Wave-Type Equations from Snapshots.}

\author{Isaac Z. Pesenson }
\address{ Department of  Mathematics, Temple University,
 Philadelphia,
PA 19122; \\pesenson@temple.edu  }

\keywords{ One-parameter groups of operators, Bernstein spaces, sampling, variational splines, interpolation, reconstruction}

\begin{abstract}

It is shown that if one knows "sufficiently many" snapshots (samples)  from an individual solution of a Schrödinger-type equation-which is bandlimited in an appropriate sense-then the solution can be reconstructed by a simple formula similar to the Shannon formula for bandlimited signals. Furthermore, results demonstrate that by using snapshots of bandlimited solutions of a Schrödinger-type equation, one can reconstruct bandlimited solutions of some relevant diffusion- and wave-type equations. Interestingly, the reconstruction formulas are essentially independent of the operators defining the equations.

\end{abstract}

\maketitle

\section{Introduction}

It is known that for the so-called uniformly correct Cauchy problems for abstract differential equations of the Schrödinger, diffusion, or wave type, the propagator can be expressed in terms of a one-parameter group or semigroup generated by a relevant operator. However, in most cases, the explicit formula for such a semigroup is unknown.

In this paper, it is shown that if one knows "sufficiently many" snapshots (the spatial profiles of  solutions   at  fixed instants in time \cite{BK}, \cite{LS})   from an individual solution of a Schrödinger-type equation that belongs to a Bernstein subspace (i.e., a bandlimited trajectory of a relevant one-parameter semigroup), then the solution can be perfectly reconstructed using a "simple" formula. This formula is similar to the Shannon formula for bandlimited signals, involving only the \(\mathrm{sinc}\) function.

Moreover, our results demonstrate that by using snapshots  of bandlimited solutions of a Schrödinger-type equation, one can reconstruct bandlimited solutions of some relevant diffusion- and wave-type equations. We show that the subsets of such bandlimited solutions are dense in the set of all solutions. The reconstruction formulas are essentially independent of the operators defining the equations. 

The paper is organized as follows.
In the first half of Section \ref{prelim}  we introduce sets of Bernstein (bandlimited) vectors ${\bf B}_{\sigma}(D),\>\sigma>0,$ in a Banach space ${\bf E}$ associated with a generator $D$ of a $C_{0}$ one-parameter group of isometries $G_{D}$. In particular, we show that when $f\in {\bf B}_{\sigma}(D)$, the trajectory $G_{D}(t)f, \>t\in \mathbb{R},$ is a bounded, vector-valued function on $\mathbb{R}$ which has extension to $\mathbb{C}$ as an entire function of  exponential type $\sigma$. We prove in the Appendix that the set $\bigcup_{\sigma>0} {\bf B}_{\sigma}(D)$ is dense in ${\bf E}$. 
In the second half of this section we prove sampling/interpolation formulas for such trajectories $G_{D}(t)f, \>t\in \mathbb{R},$ where $f\in {\bf B}_{\sigma}(D)$.

In subsection \ref{Schrod-eq}  we formulate our results on sampling and interpolation for solutions of Cauchy problems for abstract differential equations of the Schr\"{o}dinger type which are defined by generators of one parameter groups.
The objective of subsection \ref{heat-eq} is to obtain reconstruction  formulas for Bernstein solutions of abstract Cauchy problems for the diffusion-type equations which are typically determined by operators which generate one parameter semigroups. We demonstrate that unlike the situation with generators of one parameter groups, one cannot expect that the corresponding set $\bigcup_{\sigma>0} {\bf B}_{\sigma}(A)$ for a semigroup generator $A$ to be  dense in ${\bf E}$.

In other words,  if $A$ generates a semigroup and one wants  the set $\bigcup_{\sigma>0}{\bf B}_{\sigma}(A)$ to be dense in ${\bf E}$, then some additional restrictions on the operator $A$ must be imposed.  Our additional conditions are the following: 

\textit{We consider an operator  $A$ which generates in ${\bf E}$ a contraction $C_{0}$-semigroup $S_{A}(t), \>\|S_{A}(t)\|\leq 1,\>0\leq t<\infty,$  and is  such that for some $\phi\in [ -\pi, \pi)$ the operator $e^{i\phi}A$ generates  an isometry $C_{0}$-group of operators $G_{e^{i\phi}A}(t),\>t\in \mathbb{R}$.  }

This assumption allows us to explore the obvious property  
$$
{\bf B}_{\sigma}(A)={\bf B}_{\sigma}(e^{i\phi} A),
$$
and, moreover,  we are able to reduce questions about reconstruction of the trajectories of the semigroup $S_{A}$ to  questions about the reconstruction of the trajectories of the group $G_{e^{i\phi}A}$.
In fact, 
the framework we consider is motivated by the following classical example.
It is known that the heat equation 
$$
 \frac{d}{d\tau}h(\tau, x)=\Delta h(\tau, x),\>\>\>\tau\in \mathbb{R}_{+}, \>x\in \mathbb{R}^{d}, 
$$
becomes  the Schr\"{o}dinger equation 
$$
 \frac{d}{dt}\varphi(t, x)=i\Delta\varphi(t, x),\>\>\>i^{2}=-1, \>\>\>t\in \mathbb{R}, \>x\in \mathbb{R}^{d}, 
$$
where $\Delta$ is the Laplacian in $L_{2}(\mathbb{R}^{d})$,
under the transformation 
\begin{equation}\label{trans}
t=-i\tau,
\end {equation}
which maps "real" time to "imaginary" time and is known as the Wick rotation (\cite{PS}, chapter 6). The reverse   transformation $\tau=it$,
is called the reverse Wick rotation. 
We generalize this  classical situation  by considering the operator $A$ instead of $\Delta $ and $e^{i\phi}A$ instead of $i\Delta$. Our transformation 
\begin{equation}\label{our-trans}
t=e^{i\phi}\tau, 
\end{equation}
 is a generalization of the Wick transformation, which corresponds to the case where $\phi=\pi/2$. 
Our sampling results are new even in this classical case, in which our spaces ${\bf B}_{\sigma}(\Delta)$ coincide with the regular Paley-Wiener spaces $PW_{\sigma}$ of all functions in $L_{2}(\mathbb{R}^{d})$ whose Fourier transform  have support  in the cube $[-\sigma, \sigma]^{d}$.

In subsection \ref{wave-eq-1} we  consider a Cauchy problem for the wave equation of the form 
$$
\frac{d^{2}}{ds^{2}}w(s)=D^{2}w(s),\>\>s\in \mathbb{R}, \>\>w: \mathbb{R}\mapsto \mathcal{D}(D^{2}),
$$
where $D$ generates a one-parameter $C_{0}$-group $G_{D}$ of isometries and $\mathcal{D}(D^{2})$ is the domain of $D^{2}$. 
In this case we give explicit formulas for solutions of the corresponding Cauchy problems with bandlimited data in terms of snapshots  of 
trajectories of $G_{D}$. The paper also contains several formulas for weak reconstruction of bandlimited solutions of non-stationary abstract differential equations. Section \ref{classicalexamp} contains classical examples.  In Appendix we prove the fact that if $D$ generates a $C_{0}$-group of isometries in a Banach space ${\bf E}$, then set $\cup_{\sigma}{\bf B}_{\sigma}(D),\>\sigma>0,$ is dense in ${\bf E}$.
For the necessary background on abstract non-stationary equations in Banach spaces, we recommend the monographs \cite{Birman, EN, K}.

\section{Preliminaries}\label{prelim}

\subsection{One-parameter groups of operators in Banach spaces and subspaces of abstract bandlimited (Bernstein)   vectors}

The algebra of all bounded operators on a Banach space  $\EB$ with a norm $\|\cdot\|$ will be denoted as $\mathcal{B}(\EB)$. 
We say that $D$ generates a one-parameter $C_{0}$-group group $G_{D}$  of isometries  if
$$
G_{D}: \mathbb{R}\mapsto \mathcal{B}(\EB)
$$ 
\begin{enumerate}
\item $G_{D}(t_{1}+t_{2})=G_{D}(t_{1})G_{D}(t_{2}),\>\>t_{1}, t_{2}\in \mathbb{R},$
\item $G_{D}(0)=I,\>\>(I - is \>the \>identity \>operator),$
\item  $\|G_{D}(t)\|=1,\>\>t\in \mathbb{R}$,
\item $\lim_{t\rightarrow 0}\|G_{D}(t)-f\|=0,\>\>f\in \EB$.
\end{enumerate}

Such an operator is necessarily  has a dense domain and is closed. The notations $\mathcal{D}^{k}(D)$ will be used for  the domain of $D^{k}$, and notation $\mathcal{D}^{\infty}(D)$ for $\bigcap_{k\in \mathbb{N}}\mathcal{D}^{k}(D)$. If $f\in \mathcal{D}(D)$ then 
$$
\frac{d}{dt}G_{D}(t)f=\lim_{t\rightarrow 0}t^{-1}(G_{D}(t)f-f)=DG_{D}(t)f=G_{D}(t)Df,\>\>f\in \mathcal{D}(D),
$$
where convergence holds in the norm of $\EB$. The following definition was introduced and systematically used in papers \cite{Pes88}-\cite{Pes15}.

\begin{definition}\label{Bernstein space}
The Bernstein subspace
 $\mathbf{B}_{\sigma}(D), \>\sigma> 0,$ is defined as  a set of all vectors $f$ in $\mathcal{D}^{\infty}(D)$  for which the  Bernstein-type  inequalities hold
\begin{equation}\label{group-exponent}
\|D^{k}f\|\leq \sigma^{k}\|f\|, \>\>k\in \mathbb{N}.
\end{equation}
\end{definition}

It is obvious that every $\mathbf{B}_{\sigma}(D), \>\sigma> 0$, is invariant under $D$ and $G_{D}$.
\begin{theorem}\label{exp-solution}
Suppose that 
 $D$ generates a one-parameter uniformly bounded $C_{0}$-group $G_{D}$,  and  a vector $f$  belongs to a space ${\bf B}_{\sigma}(D)$. Then the following holds true
 \begin{enumerate}

 \item The trajectory $G_{D}(t)f$ has extension to the complex plain as an abstract valued entire function

\begin{equation}\label{ext}
G_{D}(z)f=\sum_{k=0}^{\infty}\frac{z^{k}}{k!}D^{k}f=e^{zD}f,\>\>z\in \mathbb{C},
\end{equation}
where the series converges uniformly on compact subsets of $\mathbb{C}$.

\item The extended trajectory $G_{D}(z)f$ has exponential type, i.e.

\begin{equation}\label{Im}
\|G_{D}(z)f\|\leq \|f\|e^{\sigma\left|\Im\> z\right|}.
\end{equation}

\end{enumerate}

\end{theorem}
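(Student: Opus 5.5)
Part (1) follows directly from the Bernstein inequalities. For $|z|\le R$ the terms of the series satisfy $\|z^{k}D^{k}f/k!\|\le (R\sigma)^{k}\|f\|/k!$, so the series converges absolutely and uniformly on compact sets by the Weierstrass M-test, and its sum $F(z)$ is an entire $\EB$-valued function. To see that $F(t)=G_{D}(t)f$ for real $t$, I would use Taylor's formula with integral remainder for the smooth trajectory $G_{D}(t)f$, using $\frac{d^{k}}{dt^{k}}G_{D}(t)f=G_{D}(t)D^{k}f$:
$$
G_{D}(t)f-\sum_{k=0}^{n-1}\frac{t^{k}}{k!}D^{k}f=\frac{1}{(n-1)!}\int_{0}^{t}(t-s)^{n-1}G_{D}(s)D^{n}f\,ds .
$$
If $M=\sup_{t}\|G_{D}(t)\|$, the remainder is at most $M|t|^{n}\sigma^{n}\|f\|/n!\to0$. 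This identifies $F$ with $G_{D}(z)f$ on $\mathbb{R}$, and for general $z$ we simply define $G_{D}(z)f$ by the series.

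For part (2), the bound $\|F(x+iy)\|\le \|F(iy)\|$ is immediate for the isometry group $\|G_{D}(t)\|=1$ used in the rest of the paper. The reason is that $F(x+iy)=G_{D}(x)F(iy)$: both sides are entire in $x$ and agree as power series, since $G_{D}(x)$ commutes with $D^{k}$ on ${\bf B}_{\sigma}(D)$ and the series can be rearranged. So it remains to show $\|F(iy)\|\le e^{\sigma|y|}\|f\|$. The crude series estimate gives exactly this:
$$
\|F(iy)\|\le\sum_{k}\frac{|y|^{k}\sigma^{k}}{k!}\|f\|=e^{\sigma|y|}\|f\|.
$$
Hence $\|G_{D}(z)f\|\le\|f\|e^{\sigma|\Im z|}$.

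The main obstacle is the merely uniformly bounded case, where the factoring step only yields $M\|f\|e^{\sigma|\Im z|}$. To remove the constant $M$, I would pass to an equivalent norm $|||g|||=\sup_{t}\|G_{D}(t)g\|$, in which $G_{D}$ is isometric, and then argue via a Phragmén–Lindelöf or Bernstein-type argument for vector-valued entire functions, applied to $\langle F(z),\psi\rangle$ for functionals $\psi$. Since the stated inequality (\ref{Im}) carries no constant, I expect the intended setting is isometries, and I would state the proof for that case, remarking that otherwise a factor $M$ appears.
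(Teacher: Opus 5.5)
Your proof is correct and takes essentially the paper's route: Taylor's formula with integral remainder identifies the series with $G_{D}(t)f$ on $\mathbb{R}$, and then $G_{D}(x+iy)f=G_{D}(x)G_{D}(iy)f$ together with the crude series bound for $G_{D}(iy)f$ gives (2). You also justify this group property of the extension, which the paper only asserts. You are right that this step tacitly needs $\|G_{D}(x)\|\le 1$, but no renorming or Phragm\'en--Lindel\"of argument can remove the factor $M$: at real $z$ the bound reads $\|G_{D}(x)f\|\le\|f\|$, which fails e.g.\ for rotations of $\mathbb{C}^{2}$ with the sup-norm (where every vector lies in ${\bf B}_{1}(D)$), so the isometry hypothesis is genuinely required.
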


\begin{proof}
Convergence of the series on bounded subsets of $\mathbb{R}$ is obvious since on every interval $[0, \>b]$ one has for all $t\in [0, \>b]$:
$$
\left\| \sum_{k=0}^{\infty}\frac{t^{k}}{k!}D^{k}f\right\|\leq \sum_{k=0}^{\infty}\frac{(b\sigma)^{k}}{k!}\|f\|=e^{b\sigma}\|f\|.
$$ 
To prove the equality, consider a Taylor series for the function $F(t)=\langle G_{D}(t)f, f^{*}\rangle$ where $f^{*}$ belongs to the dual $\EB^{*}$
\begin{equation}\label{Taylor1}
F_{D}(t)=\sum_{k=0}^{n-1}\frac{t^{k}}{k!}F^{(k)}(0)+\frac{1}{(n-1)!}\int_{0}^{t}(t-s)^{n-1}F^{(n)}(s)ds.
\end{equation}
Since
$$
F^{(k)}(0)=\langle D^{k}f, f^{*}\rangle,\>\>\>\>
F^{(n)}(s)=\langle G_{D}(s)D^{n}f, f^{*}\rangle,
$$
and the integral 
$$
\frac{1}{(n-1)!}\int_{0}^{t}(t-s)^{n-1}G_{D}(s)D^{n}fds
$$
converges in $\EB$,
one obtains for every $f^{*}\in \EB^{*}$ and every $t\in \mathbb{R}$
$$
\left\langle G_{D}(t)f-\sum_{k=0}^{n-1}\frac{t^{k}}{k!}D^{k}f, f^{*}\right\rangle=
\left\langle \frac{1}{(n-1)!}\int_{0}^{t}(t-s)^{n-1}G_{D}(s)D^{n}fds,\>f^{*}\right\rangle.
$$
 It implies the formula
 $$
 G_{D}(t)f-\sum_{k=0}^{n-1}\frac{t^{k}}{k!}D^{k}f=\frac{1}{(n-1)!}\int_{0}^{t}(t-s)^{n-1}G_{D}(s)D^{n}fds.
 $$
 Because  the norm $\|\cdot\|$ of the integral goes  to zero uniformly on compact sets in $\mathbb{R}$ when $n$ goes to infinity,  we obtain that
 
\begin{equation}\label{ext to Z}
 G_{D}(t)f=\sum_{k=0}^{\infty}\frac{t^{k}}{k!}D^{k}f,\>\>\>f\in {\bf B}_{\sigma}(D),
\end{equation}
 where the series converges uniformly on compact subsets of $\mathbb{R}$.
One can use this series to extend $G_{D}(t)f$ to the complex plain  $\mathbb{C}$ as an entire abstract function. 
Since the group property still holds for this extension, one obtains the following chain  of inequalities for a $z=x+i y\in \mathbb{C}$:
$$
\|G_{D}(z)f\|=\|G_{D}(x+i y)f\|=\|G_{D}(x)G_{D}(i y)f\|\leq \|G_{D}(iy)f\|=
$$
$$
\left\|\sum_{k}\frac{(iy)^{k}}{k!}D^{k}f\right \|\leq \|f\|e^{\sigma |y|},\>\>\>f\in {\bf B}_{\sigma}(D).
$$
Lemma is proven.

\end{proof}

\begin{corollary}
 If $D$ generates a one-parameter $C_{0}$-group of isometries, then the following holds true.
 
 \begin {enumerate}
 
 \item Every $\mathbf{B}_{\sigma}(D)$ is a  linear subspace of ${\bf E}$. 
 
 \item Every $\mathbf{B}_{\sigma}(D)$ is a closed subspace of ${\bf E}$.

 \end{enumerate}
 
\end{corollary}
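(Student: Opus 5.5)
The natural route is to trade the Bernstein inequalities for a description of $\mathbf{B}_{\sigma}(D)$ in terms of the growth of the extended trajectory, which is what Theorem \ref{exp-solution} supplies. Linearity is not immediate from Definition \ref{Bernstein space}: the inequality $\|D^{k}f\|\le\sigma^{k}\|f\|$ compares $D^{k}f$ with $\|f\|$, and $\|f+g\|$ may be much smaller than $\|f\|+\|g\|$. So I will prove the characterization
$$
f\in\mathbf{B}_{\sigma}(D)\iff \sup_{k\ge 0}\sigma^{-k}\|D^{k}f\|<\infty ,\qquad f\in\mathcal{D}^{\infty}(D),
$$
together with the equality $\sup_{k}\sigma^{-k}\|D^{k}f\|=\|f\|$ for such $f$. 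Linearity then follows at once, because the right-hand condition is clearly preserved under sums and scalar multiples.

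For the nontrivial direction, suppose $\|D^{k}f\|\le C\sigma^{k}$ for all $k$. The proof of Theorem \ref{exp-solution} goes through verbatim with $\|f\|$ replaced by $C$. Hence $F(z)=\langle G_{D}(z)f,f^{*}\rangle$ is entire of exponential type $\sigma$, and it is bounded on $\mathbb{R}$ by $\|f\|\|f^{*}\|$ because $G_{D}$ consists of isometries. Next, $F^{(k)}(t)=\langle G_{D}(t)D^{k}f,f^{*}\rangle$. Bernstein's inequality for entire functions of exponential type $\sigma$ that are bounded on the real line gives $\sup_{\mathbb{R}}|F^{(k)}|\le\sigma^{k}\sup_{\mathbb{R}}|F|$. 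Evaluating at $t=0$ gives $|\langle D^{k}f,f^{*}\rangle|\le\sigma^{k}\|f\|\|f^{*}\|$, and taking the supremum over $\|f^{*}\|\le 1$ (Hahn--Banach) gives $\|D^{k}f\|\le\sigma^{k}\|f\|$. So $f\in\mathbf{B}_{\sigma}(D)$. This is the key step. The obstacle I expect here is to make sure the scalar Bernstein inequality is applicable: we need exponential type $\sigma$ (from the series bound, with constant $C$) and boundedness on $\mathbb{R}$ (from the isometry property). It is at this point that the isometry hypothesis is genuinely used.

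For closedness, take $f_{n}\in\mathbf{B}_{\sigma}(D)$ with $f_{n}\to f$. By linearity $f_{n}-f_{m}\in\mathbf{B}_{\sigma}(D)$, so
$$
\|D^{k}f_{n}-D^{k}f_{m}\|\le\sigma^{k}\|f_{n}-f_{m}\|,
$$
and therefore $D^{k}f_{n}$ is Cauchy for every $k$. Since $D$ is closed, an induction on $k$ gives $f\in\mathcal{D}^{k}(D)$ with $D^{k}f_{n}\to D^{k}f$ for every $k$, so $f\in\mathcal{D}^{\infty}(D)$. Passing to the limit in $\|D^{k}f_{n}\|\le\sigma^{k}\|f_{n}\|$ then yields $\|D^{k}f\|\le\sigma^{k}\|f\|$, that is, $f\in\mathbf{B}_{\sigma}(D)$.
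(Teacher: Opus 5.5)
Your proposal is correct and follows the paper's route. The paper calls linearity a direct consequence of Theorem~\ref{exp-solution}, which tacitly needs your extra step: exponential type plus boundedness on $\mathbb{R}$, followed by the scalar Bernstein inequality, exactly as the paper only later spells out in proving $\mathbf{B}_{\sigma}(D)=\mathbf{WB}_{\sigma}(D)$; closedness is proved by the same Cauchy-sequence and closed-operator argument, where your explicit use of $f_{n}-f_{m}\in\mathbf{B}_{\sigma}(D)$ makes precise a step the paper leaves implicit.
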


\begin{proof}
The first item is a direct consequence of the previous Lemma.
To prove the second item we consider the sequence $\{f_{j}\}\in \mathbf{B}_{\sigma}(D)$ which converges in ${\bf E}$ to $f$.  Since the sequence $\{f_{j}\}$ is fundamental in ${\bf E}$,  the Bernstein inequality implies that $\{D^{k}f_{j}\},\>\>k\in \mathbb{N},$ is fundamental. Since $D^{k},\>\>k\in \mathbb{N},$ is a closed operator it implies that $f$ belongs to the domain of $D^{k},\>\>k\in \mathbb{N},$ and the Bernstein inequality 
$$
\|D^{k}f\|\leq \sigma^{k}\|f\|
$$
holds for any $k\in \mathbb{N}$.
  Hence the corollary is proved.

\end{proof}

\begin{definition}
 If $D$ generates a one-parameter $C_{0}$-group of isometries $G_{D}$, we say that a vector $f\in \EB$ belongs to ${\bf WB_{\sigma}}(D),\>\sigma>0,$ if for every functional $f^{*}\in \EB^{*}$ the function $\langle G_{D}(t)f, f^{*}\rangle$  belongs to the space $B_{\sigma}^{\infty}(\mathbb{R})$ of entire functions of  exponential type $\sigma$ bounded on the real line.
\end{definition}

\begin{theorem}
 If $D$ generates a one parameter group of isometries of class $C_{0}$ then for every $\sigma>0$ the subspaces ${\bf B}_{\sigma}(D)$ and ${\bf WB}_{\sigma}(D)$ coincide.
\end{theorem}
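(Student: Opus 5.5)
The inclusion ${\bf B}_{\sigma}(D)\subset {\bf WB}_{\sigma}(D)$ follows directly from Theorem \ref{exp-solution}. If $f\in{\bf B}_{\sigma}(D)$ and $f^{*}\in\EB^{*}$, then $F(z)=\langle G_{D}(z)f,f^{*}\rangle$ is an entire function, because the series (\ref{ext}) converges uniformly on compact sets. By (\ref{Im}) it satisfies $|F(z)|\le \|f^{*}\|\|f\|e^{\sigma|\Im z|}$. So $F$ has exponential type $\sigma$ and is bounded on $\mathbb{R}$ by $\|f\|\|f^{*}\|$, which means $F\in B^{\infty}_{\sigma}(\mathbb{R})$.

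For the converse, take $f\in{\bf WB}_{\sigma}(D)$ and use the classical Bernstein inequality for $B^{\infty}_{\sigma}(\mathbb{R})$: $\sup_{\mathbb{R}}|F^{(k)}|\le\sigma^{k}\sup_{\mathbb{R}}|F|$. Fix $f^{*}$ and set $F(t)=\langle G_{D}(t)f,f^{*}\rangle$. Then $|F(t)|\le\|f\|\|f^{*}\|$ because $G_{D}$ consists of isometries, and hence $|F^{(k)}(t)|\le \sigma^{k}\|f\|\|f^{*}\|$ for all $k$ and $t$.

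The main obstacle is that we do not yet know $f\in\mathcal{D}^{\infty}(D)$, so $F^{(k)}$ cannot simply be identified with $\langle G_{D}(t)D^{k}f,f^{*}\rangle$. I would argue by induction on $k$, using the weak characterization of the generator domain: if $g\in\EB$ and, for every $f^{*}$, the difference quotients $t^{-1}\langle G_{D}(t)g-g,f^{*}\rangle$ converge, then $g\in\mathcal{D}(D)$. This holds because the weak generator of a $C_{0}$-semigroup coincides with the strong one (a standard result, e.g.\ in \cite{EN}).

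For $k=1$: $F$ is differentiable at $0$, so $t^{-1}\langle G_{D}(t)f-f,f^{*}\rangle\to F'(0)$ for every $f^{*}$, and therefore $f\in\mathcal{D}(D)$. Then $F'(t)=\langle G_{D}(t)Df,f^{*}\rangle$. This shows $Df\in{\bf WB}_{\sigma}(D)$, since $F'$ again lies in $B^{\infty}_{\sigma}(\mathbb{R})$. Iterating gives $f\in\mathcal{D}^{\infty}(D)$ and $F^{(k)}(t)=\langle G_{D}(t)D^{k}f,f^{*}\rangle$.

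Evaluating at $t=0$ gives $|\langle D^{k}f,f^{*}\rangle|\le\sigma^{k}\|f\|\|f^{*}\|$ for every $f^{*}$. Taking the supremum over $\|f^{*}\|\le1$ (Hahn--Banach) yields $\|D^{k}f\|\le\sigma^{k}\|f\|$, that is, $f\in{\bf B}_{\sigma}(D)$.

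An alternative that avoids the weak-generator theorem in this step is to start from the uniform bound $|F'(t)|\le\sigma\|f\|\|f^{*}\|$. The mean value theorem then gives $\|t^{-1}(G_{D}(t)f-f)\|\le\sigma\|f\|$ by Hahn--Banach, and the weak convergence can be upgraded using weak closedness of $D$. I would try the weak-generator route first.
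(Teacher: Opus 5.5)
Your two directions follow the paper's proof: ${\bf B}_\sigma(D)\subset{\bf WB}_\sigma(D)$ comes from Theorem~\ref{exp-solution}, and the converse from the scalar Bernstein inequality plus a norming functional. The paper simply writes $(d/dt)^k\langle G_D(t)f,f^*\rangle=\langle G_D(t)D^kf,f^*\rangle$, tacitly assuming $f\in\mathcal{D}^\infty(D)$. You correctly identify this as the real issue, but the lemma you use to settle it is false as stated.

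Convergence of $t^{-1}\langle G_D(t)g-g,f^*\rangle$ for each $f^*$ only defines a bounded linear functional on ${\bf E}^*$, i.e.\ an element of ${\bf E}^{**}$. The theorem that the weak generator equals the generator needs a weak limit $y\in{\bf E}$. In non-reflexive spaces this matters even for isometry groups. Take ${\bf E}=C_0(\mathbb{R})$ with translations, and $g(x)=x^2\sin(1/x)\chi(x)$ with $\chi\in C_c^\infty(\mathbb{R})$ equal to $1$ near $0$. Then $g$ is Lipschitz and differentiable everywhere, so by dominated convergence $t^{-1}\int(g(x+t)-g(x))\,d\mu(x)\to\int g'\,d\mu$ for every finite measure $\mu$. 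Yet $g'$ is discontinuous at $0$, so $g\notin\mathcal{D}(D)$.

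Your fallback has the same defect. The bound $\|t^{-1}(G_D(t)f-f)\|\le\sigma\|f\|$ only places $f$ in the Favard class (here, the Lipschitz functions). Using weak closedness of $D$ requires weak convergence in ${\bf E}$, which boundedness does not give without reflexivity.

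What is missing is to use the exponential type to produce the limit inside ${\bf E}$. The M.~Riesz (Boas) interpolation formula holds for every $F\in B^\infty_\sigma(\mathbb{R})$:
\[
F'(0)=\frac{\sigma}{\pi^{2}}\sum_{k\in\mathbb{Z}}\frac{(-1)^{k+1}}{(k-\frac{1}{2})^{2}}\,F\Bigl(\frac{\pi}{\sigma}\bigl(k-\tfrac{1}{2}\bigr)\Bigr).
\]
Applied to $F(t)=\langle G_D(t)f,f^*\rangle$, it gives $F'(0)=\langle y,f^*\rangle$ for all $f^*$, where
\[
y=\frac{\sigma}{\pi^{2}}\sum_{k\in\mathbb{Z}}\frac{(-1)^{k+1}}{(k-\frac{1}{2})^{2}}\,G_D\Bigl(\frac{\pi}{\sigma}\bigl(k-\tfrac{1}{2}\bigr)\Bigr)f .
\]
This series converges absolutely in norm since $\|G_D(s)f\|=\|f\|$. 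Alternatively, write $F=F*\varphi$ for a Schwartz $\varphi$ whose Fourier transform equals $1$ on a neighbourhood of $[-\sigma,\sigma]$, and take $y=\int_{\mathbb{R}}\varphi'(-s)G_D(s)f\,ds$.

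Either way, the difference quotients now converge weakly to $y\in{\bf E}$. The weak-generator theorem then gives $f\in\mathcal{D}(D)$ with $Df=y$, and $\langle G_D(t)Df,f^*\rangle=F'(t)\in B^\infty_\sigma(\mathbb{R})$. So $Df\in{\bf WB}_\sigma(D)$, and your induction and final Hahn--Banach step go through.
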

\begin{proof}
If $f\in {\bf WB}_{\sigma}(D)$ then for every $f^{*}\in \EB^{*}$ the function $\langle G_{D}(t)f, f^{*}\rangle$ satisfies the classical Bernstein inequality  \cite{Akh}, i.e.
$$
\sup_{t\in \mathbb{R}}\left| \left(\frac{d}{dt}\right)^{k}\langle G_{D}(t)f, f^{*}\rangle\right 
|\leq \sigma^{k}\sup_{t\in \mathbb{R}}\left|\langle G_{D}(t)f, f^{*}\rangle\right|,
$$
which implies
$$
\sup_{t\in \mathbb{R}}|\langle G_{D}(t)D^{k}f, f^{*}\rangle|\leq \sigma^{k} \|f^{*}\|\|f\|.
$$
From here, for the functional for which $\|f^{*}\|=1$, and $\langle G_{D}(t)D^{k}f, f^{*}\rangle=\|D^{k}f\|$ we obtain the Bernstein inequalities 
$$
\|D^{k}f\|\leq \sigma^{k}\|f\|,\>\>k\in \mathbb{N},
$$
which shows that $f\in {\bf B}_{\sigma}(D)$.
Conversely, if $f\in {\bf B}_{\sigma}(D)$ then according to Lemma \ref{exp-solution} we have 
$
G_{D}(t)f
$
has extension  $G_{D}(z)f,\>z\in \mathbb{C}$  as abstract function of exponential type $\sigma$.
 It implies that for any $f^{*}\in \EB^{*}$ the scalar valued function $ \langle G_{D}(t)f, f^{*}\rangle$ can be extended as entire function to $\mathbb{C}$, and the next estimate holds 
$$
\left| \langle G_{D}(z)f, f^{*}\rangle\right|\leq \|f\|\|f^{*}\| \sum
^{\infty}_{k=0}|z|^{k}\sigma^{k}/k!= \|f\|\|f^{*}\|e^{|z|\sigma}.
$$
Thus the function $ \langle G_{D}(z)f, f^{*}\rangle$ is bounded on $\mathbb{R}$, has extension to $\mathbb{C}$ as an entire function, and has exponential type $\sigma$, which means that $f\in  {\bf WB}_{\sigma}(D)$. Theorem is proven.

\end{proof}

The following important fact was proven in   \cite{Pes14}.

\begin{theorem}\label{key-Bern-th}
  If $D$ generates a one-parameter group of isometries of class $C_{0}$ in a Banach space $\mathbf{E}$ then the set $\bigcup_{\sigma>0}{\bf B}_{\sigma}(D),$ is dense in $\mathbf{E}$.

\end{theorem}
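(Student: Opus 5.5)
We prove density by smoothing with entire functions of exponential type whose Fourier transforms have compact support, the abstract analogue of convolving with a Fejér or de la Vallée Poussin kernel. Fix a function $\psi\in L_{1}(\mathbb{R})$ such that $\int_{\mathbb{R}}\psi(t)\,dt=1$ and its Fourier transform $\hat\psi$ is supported in $[-1,1]$. A concrete choice is the Fejér-type kernel $\psi(t)=\frac{1}{2\pi}\left(\frac{\sin(t/2)}{t/2}\right)^{2}$, whose Fourier transform is a triangle supported in $[-1,1]$. For $\sigma>0$ put $\psi_{\sigma}(t)=\sigma\psi(\sigma t)$, so that $\hat\psi_{\sigma}$ is supported in $[-\sigma,\sigma]$. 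For $f\in\EB$ define the Bochner integral
$$
R_{\sigma}f=\int_{\mathbb{R}}\psi_{\sigma}(t)G_{D}(t)f\,dt .
$$
It converges because $\|G_{D}(t)f\|=\|f\|$ and $\psi_{\sigma}\in L_{1}$. The plan is to show two things: first, $R_{\sigma}f\to f$ as $\sigma\to\infty$; second, $R_{\sigma}f\in{\bf B}_{\sigma}(D)$.

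\textbf{Convergence.} We have $R_{\sigma}f-f=\int\psi(s)\left(G_{D}(s/\sigma)f-f\right)ds$. Each integrand tends to $0$ pointwise by strong continuity of the group. The integrand is dominated by $2|\psi(s)|\|f\|$. Dominated convergence therefore gives $\|R_{\sigma}f-f\|\to 0$.

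\textbf{Membership in ${\bf B}_{\sigma}(D)$.} This is the main step, and the cleanest route is the preceding theorem, ${\bf B}_{\sigma}(D)={\bf WB}_{\sigma}(D)$. Fix $f^{*}\in\EB^{*}$ and consider
$$
F(\tau)=\langle G_{D}(\tau)R_{\sigma}f,f^{*}\rangle=\int\psi_{\sigma}(t)\,\langle G_{D}(t+\tau)f,f^{*}\rangle\,dt=(\check\psi_{\sigma}*g)(\tau),
$$
where $g(t)=\langle G_{D}(t)f,f^{*}\rangle$ is a bounded continuous function and $\check\psi_{\sigma}(t)=\psi_{\sigma}(-t)$. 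Then $F$ is bounded by $\|\psi\|_{1}\|f\|\|f^{*}\|$. We must show $F$ is an entire function of exponential type $\sigma$. The distributional Fourier transform of $F$ is $\hat{\check\psi}_{\sigma}\,\hat g$, which is supported in $[-\sigma,\sigma]$. By the Paley--Wiener--Schwartz theorem (see \cite{Akh} for the bounded case), a bounded function with spectrum in $[-\sigma,\sigma]$ belongs to $B^{\infty}_{\sigma}(\mathbb{R})$.

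The technical obstacle is justifying this spectral-support argument for a merely bounded $g$, since $\hat g$ is only a tempered distribution. I would avoid heavy distribution theory by a direct route. Because $\hat\psi_{\sigma}$ is compactly supported and continuous, $\psi_{\sigma}(t)=\frac{1}{2\pi}\int_{-\sigma}^{\sigma}\hat\psi_{\sigma}(\xi)e^{it\xi}d\xi$ extends to an entire function of exponential type $\sigma$. For the Fejér kernel one also has the explicit bound $|\psi_{\sigma}(t+iy)|\le C_{\sigma}e^{\sigma|y|}/(1+t^{2})$. Substituting, $F(\tau)=\int\psi_{\sigma}(t-\tau)g(t)\,dt$, and the right-hand side defines an entire function of $\tau\in\mathbb{C}$. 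It satisfies $|F(\tau)|\le C\|g\|_{\infty}e^{\sigma|\Im\tau|}$, obtained by integrating the explicit bound. Hence $F\in B^{\infty}_{\sigma}(\mathbb{R})$ for every $f^{*}$, so $R_{\sigma}f\in{\bf WB}_{\sigma}(D)={\bf B}_{\sigma}(D)$.

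Alternatively, one can verify the Bernstein inequalities directly. Writing $D^{k}R_{\sigma}f=(-1)^{k}\int\psi_{\sigma}^{(k)}(t)G_{D}(t)f\,dt$ via integration by parts gives $\|D^{k}R_{\sigma}f\|\le\|\psi^{(k)}_{\sigma}\|_{1}\|f\|$. However, this yields the wrong constant relative to $\|R_{\sigma}f\|$, so the weak route above is preferable. Combining membership with convergence, $\bigcup_{\sigma>0}{\bf B}_{\sigma}(D)$ is dense in $\EB$.
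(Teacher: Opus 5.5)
Your proof is correct and follows essentially the same route as the paper's Appendix: smooth the orbit $G_{D}(t)f$ against an $L_{1}$-normalized kernel that is entire of exponential type $\sigma$, show that the smoothed vector lies in ${\bf B}_{\sigma}(D)$ because its extended trajectory has exponential type $\sigma$, and let $\sigma\to\infty$. The only differences are in the details. The paper uses $h(t)=a(\sin(t/4)/t)^{4}$, whose finite first moment yields the quantitative bound $\|f-\mathcal{R}_{h}^{\sigma}(f)\|\le C_{h}\Omega(f,\sigma^{-1})$, whereas your dominated-convergence argument needs no moment condition, so the Fejér kernel suffices; in addition, your explicit bound on $\psi_{\sigma}(t+iy)$ replaces the paper's appeal to $\int|\phi(t-z)|\,dt\le e^{\sigma|z|}$.
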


The proof of this theorem is provided in the Appendix for completeness.

\subsection{Valiron-Tschakalov sampling and interpolation formula for bandlimited trajectories}\label
{ sampling group trajectories}

We assume that $D$ generates one-parameter strongly continuous  group of isometries  $G_{D}(t), \>\>t\in \mathbb{R},$ in a Banach space ${\bf E}$. In this section we prove explicit formulas for a  trajectory $G_{D}(t)f$ with $f\in \mathbf{B}_{\sigma}(D)$ in terms of a countable number of equally spaced samples. 
Let us remind that  the function $ \sinc$ is defined for $x\in \mathbb{R}$ as follows
{
$$
  \sinc\> x=\begin{cases}
                \frac{\sin \pi x }{\pi x}&,  \, \text {x $\neq 0$}\\
                1                    & ,\, \text {x$=0$}.
                \end{cases}
                $$}

The next theorem is given in \cite{BFHSS}.
\begin{theorem}\label{exact sampling}
If $f$ belongs to the space $B_{\sigma}^{2}(\mathbb{R}),$ of all  functions in $L_{2}(\mathbb{R})$  which have extension to $\mathbb{C}$ as entire functions of exponential type $\sigma$, 
then for $z\in \mathbb{C}$ one has
\begin{equation}\label{S20}
f(z)=\sum_{k\in \mathbb{Z}}  f\left(\frac{k\pi}{\sigma}\right) \sinc\left(\frac{\sigma}{\pi}z-k\right),
\end{equation}
where the series absolutely and uniformly converges on all strips of bounded width parallel to the real axis.
\end{theorem}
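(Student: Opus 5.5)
The plan is to reduce to the case $\sigma=\pi$ by rescaling and then prove the Whittaker--Kotelnikov--Shannon formula, first on the real axis and then in the complex plane. If $f\in B^{2}_{\sigma}(\mathbb{R})$, then $g(z)=f(\pi z/\sigma)$ belongs to $B^{2}_{\pi}(\mathbb{R})$. In that case (\ref{S20}) reads
$$
g(z)=\sum_{k\in\mathbb{Z}} g(k)\,\sinc(z-k),
$$
so it suffices to treat $\sigma=\pi$.

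The first step is the Paley--Wiener theorem: every $g\in B^{2}_{\pi}(\mathbb{R})$ can be written as
$$
g(z)=\frac{1}{2\pi}\int_{-\pi}^{\pi}\hat g(\xi)e^{iz\xi}\,d\xi,\qquad \hat g\in L_{2}(-\pi,\pi).
$$
Next I would expand $\hat g$ in the orthonormal basis $(2\pi)^{-1/2}e^{-ik\xi}$ of $L_{2}(-\pi,\pi)$. The Fourier coefficients are, up to normalization, exactly the samples $g(k)$, and Parseval gives $\sum_{k}|g(k)|^{2}=\|g\|_{L_{2}}^{2}<\infty$.

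Substituting the expansion into the integral is legitimate because $L_{2}$-convergence on $(-\pi,\pi)$ is paired with the fixed function $e^{iz\xi}\in L_{2}(-\pi,\pi)$. Integrating term by term gives
$$
\frac{1}{2\pi}\int_{-\pi}^{\pi}e^{i(z-k)\xi}\,d\xi=\frac{\sin\pi(z-k)}{\pi(z-k)}=\sinc(z-k).
$$
This identity holds for all complex $z$, so the formula is proved pointwise on $\mathbb{C}$.

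For the convergence claims, fix the strip $|\Im z|\le h$. By Cauchy--Schwarz, the tail satisfies
$$
\Bigl|\sum_{|k|>N} g(k)\sinc(z-k)\Bigr|\le\Bigl(\sum_{|k|>N}|g(k)|^{2}\Bigr)^{1/2}\Bigl(\sum_{k}|\sinc(z-k)|^{2}\Bigr)^{1/2}.
$$
The main obstacle is the uniform bound on the second factor over the strip. I would argue it directly: $|\sin\pi(z-k)|\le \cosh(\pi h)$, and $|z-k|\ge|\Re z-k|$. Hence $\sum_{k}|\sinc(z-k)|^{2}\le C_{h}$ uniformly, where the finitely many $k$ with $|\Re z-k|<1$ are handled by the boundedness of $\sinc$ on compact parts of the strip together with periodicity in $\Re z$. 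Since the first factor tends to zero, this gives uniform convergence on the strip.

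Absolute convergence needs more care, because $\sum_{k}|g(k)|/|k|$ with $\sum_{k}|g(k)|^{2}<\infty$ is still finite by Cauchy--Schwarz against $\sum_{k} k^{-2}$. The same estimate $\sum_{|k|>N}|g(k)||\sinc(z-k)|\le(\sum_{|k|>N}|g(k)|^{2})^{1/2}(\sum_{k}|\sinc(z-k)|^{2})^{1/2}$ then shows that the series converges absolutely and uniformly on every such strip.
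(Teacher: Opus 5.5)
Your proof is correct. The paper gives no proof of this statement: it quotes it from \cite{BFHSS} and uses it as a black box. In particular, the absolute and uniform convergence on strips is what is invoked for $\psi$ in the proof of (\ref{identity}) and for $\Phi$ in the proof of Theorem~\ref{V-T}.

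What you supply is the classical self-contained argument: rescaling to $\sigma=\pi$, the Paley--Wiener representation, expansion of $\hat g$ in the trigonometric basis of $L_{2}(-\pi,\pi)$, and termwise pairing with $e^{iz\xi}$. Its advantage over the citation is that it shows exactly why absolute convergence holds in $B^{2}_{\sigma}(\mathbb{R})$. The samples lie in $\ell^{2}$ by Parseval, and the kernel $k\mapsto\sinc(z-k)$ lies in $\ell^{2}$ uniformly on strips. This is precisely what fails for $B^{\infty}$ (compare the remark after Theorem~\ref{weak sampling}).

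Two small points. First, the step you call the main obstacle follows in one line from the same Fourier framework. Since $\sinc(z-k)=\frac{1}{2\pi}\int_{-\pi}^{\pi}e^{iz\xi}e^{-ik\xi}\,d\xi$ are normalized Fourier coefficients of $\xi\mapsto e^{iz\xi}$, Parseval gives
\[
\sum_{k\in\mathbb{Z}}|\sinc(z-k)|^{2}=\frac{1}{2\pi}\int_{-\pi}^{\pi}e^{-2\xi\,\Im z}\,d\xi=\frac{\sinh(2\pi\,\Im z)}{2\pi\,\Im z}
\]
(read as $1$ when $\Im z=0$), which is bounded on $|\Im z|\le h$. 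Your elementary estimate also works, but the appeal to periodicity is unnecessary. The near terms involve $w=z-k$ in the compact rectangle $|\Re w|\le 1$, $|\Im w|\le h$, where $\sinc$ is bounded.

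Second, absolute convergence needs no extra care, and the remark about $\sum_{k}|g(k)|/|k|$ is beside the point. The Cauchy--Schwarz bound you wrote already controls $\sum_{|k|>N}|g(k)|\,|\sinc(z-k)|$ uniformly on the strip.
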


\begin{remark}
If $\frac{\sigma}{\pi}z\in \mathbb{Z}\setminus\{0\}$ then every term containing $\sinc \left(\frac{\sigma}{\pi}z-k\right)$
in the series (\ref{S20})
 is zero,  except  when $\frac{\sigma}{\pi}z=k$ (or $z=\frac{k\pi}{\sigma}$).
 In this case, the corresponding non-trivial term is  $f(k\pi/\sigma)$.
 In other words, the left- and  right-hand sides of the formula (\ref{S20}) are identical. It is why these formulas are known as interpolation formulas.
\end{remark}

We are going to use Theorem \ref{exact sampling} to obtain  a generalization of the Valiron-Tschakaloff  \cite{BFHSS} sampling and interpolation theorem.

\begin{lemma}
The following identity holds
 \begin{equation}\label{identity}
\frac{\sinc\>z-1}{z}=-\sum_{k\in \mathbb{Z}\setminus \{0\}}\frac{\sinc(z-k)}{k}.
\end{equation}
\end{lemma}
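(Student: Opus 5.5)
We apply Theorem \ref{exact sampling} with $\sigma=\pi$ to the function
$$
g(z)=\frac{\sinc\, z-1}{z},\qquad g(0)=0 .
$$
This $g$ is entire: $\sinc\, z-1$ is entire and vanishes at $z=0$, with a zero of order two because $\sinc$ is even. It has exponential type $\pi$, since $|\sinc\, z|\le C e^{\pi|\Im z|}$ and dividing by $z$ away from the origin does not increase the type. On the real line it lies in $L_{2}(\mathbb{R})$, because $|g(x)|\le 2/|x|$ for $|x|\ge 1$ and $g$ is bounded near $0$. Hence $g\in B_{\pi}^{2}(\mathbb{R})$.

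Theorem \ref{exact sampling} with $\sigma=\pi$ then gives
$$
g(z)=\sum_{k\in\mathbb{Z}} g(k)\,\sinc(z-k).
$$
The samples are easy to compute. We have $g(0)=0$. For $k\ne 0$, $\sinc\, k=0$, so $g(k)=-1/k$. Substituting gives exactly (\ref{identity}), and the series converges absolutely and uniformly on horizontal strips, as Theorem \ref{exact sampling} guarantees.

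The main obstacle is to verify the exponential type carefully, in particular the bound $|g(z)|\le C e^{\pi|\Im z|}$ uniformly, including near $z=0$. Near the origin we use boundedness of the entire function $g$ on the unit disc. For $|z|\ge 1$ we use $|g(z)|\le |\sinc\, z|+1$ together with $|\sin \pi z|\le e^{\pi|\Im z|}$.

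If one prefers to avoid Theorem \ref{exact sampling}, there is an alternative route. Expand $1/(\pi z)$ and $\cot$ into partial fractions and use
$$
\sinc(z-k)=\frac{(-1)^{k}\sin\pi z}{\pi(z-k)}.
$$
This reduces the claim to the classical partial-fraction identity
$$
\frac{\pi}{\sin \pi z}=\frac1z+\sum_{k\neq0}(-1)^{k}\left(\frac{1}{z-k}+\frac1k\right).
$$
The sampling argument above is shorter, so it is the one I would use.
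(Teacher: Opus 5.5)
Your proof is correct and is the paper's argument. The paper applies Theorem \ref{exact sampling} with $\sigma=\pi$ to $\psi(z)=(1-\sinc z)/z$, $\psi(0)=0$, which is just $-g$, and reads off the samples $\psi(k)=1/k$; your explicit check that $g\in B_{\pi}^{2}(\mathbb{R})$ (entire, exponential type $\pi$, square integrable on $\mathbb{R}$) fills in a step the paper only asserts.
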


\begin{proof}
Consider the following function  
$$
\psi(z)=\begin{cases}
                \frac{1-\sinc\> z}{z}&,\, \text {z $\neq 0$}\\
               0                    &,\, \text {z$=0$}.
                \end{cases}
                $$
                One has for $k\in \mathbb{Z}$, 
                
                $$
\psi(k)=\begin{cases}
                \frac{1}{k}&,\, \text {k $\neq 0$}\\
               0                    &,\, \text {k$=0$}.
                \end{cases}
                $$
                The  function $\psi$ belongs to $B_{\pi}^{2}(\mathbb{R})$ and we can apply to it  Theorem \ref{exact sampling} to obtain

\begin{equation}\label{intermediate}
\psi(z)=\sum_{k\in \mathbb{Z}\setminus \{0\}}\psi(k) \sinc(z-k),
\end{equation}
which is exactly (\ref{identity}). Lemma is proven.

\end{proof}

\begin{theorem}\label{V-T}
If $D$ generates a one parameter $C_{0}$-group $G_{D}(t)$ of isometries,  and $f\in \mathbf{B}_{\sigma}(D)$, then the following sampling formula holds 
\begin{equation}\label{s100}
G_{D}(z)f=\sinc\left(\frac{\sigma z}{\pi}\right)f+
$$
$$
z\>\sinc\left(\frac{\sigma z}{\pi}\right)Df +
\sum_{k\in \mathbb{Z}\setminus\{0\}}   \left(\frac{\sigma z}{k\pi}\right)     \sinc\left(\frac{\sigma z}{\pi}-k\right)G_{D}\left(\frac{k\pi}{\sigma}\right)f ,
\end{equation}
where the  series converges in the norm of ${\bf E}$ uniformly on  all strips of bounded width parallel to the real axis, and in particular, on compact subsets of $\mathbb{C}$.
\end{theorem}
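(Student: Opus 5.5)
The plan is to prove the scalar Valiron–Tschakaloff formula for functions in $B_{\sigma}^{\infty}(\mathbb{R})$ and then lift it to ${\bf E}$ by duality. Fix $f\in\mathbf{B}_{\sigma}(D)$ and $f^{*}\in\EB^{*}$, and put $F(z)=\langle G_{D}(z)f,f^{*}\rangle$. By Theorem \ref{exp-solution}, $F$ is entire, $|F(z)|\leq \|f\|\|f^{*}\|e^{\sigma|\Im z|}$, and $F'(0)=\langle Df,f^{*}\rangle$. So $F$ is of exponential type $\sigma$ and bounded on $\mathbb{R}$, i.e. $F\in B_{\sigma}^{\infty}(\mathbb{R})$, as in the theorem identifying ${\bf B}_{\sigma}(D)$ and ${\bf WB}_{\sigma}(D)$.

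\textbf{Scalar step.} Define $h(z)=(F(z)-F(0))/z$ for $z\neq 0$ and $h(0)=F'(0)$. Then $h$ is entire and of exponential type $\sigma$. On $\mathbb{R}$ it is bounded near $0$ and is $O(1/|x|)$ at infinity, so $h\in L_{2}(\mathbb{R})$ and hence $h\in B_{\sigma}^{2}(\mathbb{R})$. Theorem \ref{exact sampling} gives
$$
h(z)=F'(0)\,\sinc\Big(\frac{\sigma z}{\pi}\Big)+\sum_{k\neq 0}\frac{\sigma}{k\pi}\Big(F\big(\tfrac{k\pi}{\sigma}\big)-F(0)\Big)\sinc\Big(\frac{\sigma z}{\pi}-k\Big).
$$
Multiply by $z$ and add $F(0)$. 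With $w=\sigma z/\pi$, the terms involving $F(0)$ inside the sum are
$$
-F(0)\,w\sum_{k\neq 0}\frac{\sinc(w-k)}{k}=-F(0)\,(1-\sinc w),
$$
by the identity (\ref{identity}). Adding $F(0)$ to this leaves exactly $F(0)\sinc w$. Therefore
$$
F(z)=\sinc(w)F(0)+z\,\sinc(w)F'(0)+\sum_{k\neq0}\frac{\sigma z}{k\pi}\sinc(w-k)F\big(\tfrac{k\pi}{\sigma}\big),
$$
which is the weak form of (\ref{s100}).

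\textbf{Norm convergence.} Next show that the vector series $\sum_{k\neq0}\frac{\sigma z}{k\pi}\sinc(w-k)G_{D}(k\pi/\sigma)f$ converges absolutely in norm, uniformly on strips $|\Im z|\leq c$ intersected with bounded sets. Since $\|G_{D}(k\pi/\sigma)f\|=\|f\|$, the $k$-th term has norm at most
$$
\|f\|\,\frac{|w|}{|k|}\cdot\frac{|\sin\pi w|}{\pi|w-k|}.
$$
Here $|\sin \pi w|\leq e^{\pi|\Im w|}$ is bounded on the strip, and $|w-k|\geq |k|/2$ once $|k|\geq 2|w|$. So the tail is dominated by a constant times $\sum |k|^{-2}$, which gives absolute and uniform convergence on compact sets.

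For uniformity on a whole strip, I would handle large $|\Re z|$ as in the scalar theorem: write $z\,\sinc(w-k)/k=\sinc(w-k)\big(1+(z-k\pi/\sigma)/k\big)$. The first piece is controlled by the uniform convergence of cardinal series of bounded vectors. The second piece equals $\frac{\sin\pi w}{\pi k}\cdot\frac{\pi}{\sigma}$ up to sign, and its alternating structure is handled by summation by parts. This uniformity on full strips is the step I expect to be the main technical obstacle. On compact sets, the simple domination above already suffices.

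\textbf{Identification.} Let $S(z)$ denote the norm-convergent right-hand side of (\ref{s100}). Since pairing with $f^{*}$ is continuous, $\langle S(z),f^{*}\rangle=F(z)=\langle G_{D}(z)f,f^{*}\rangle$ for every $f^{*}\in\EB^{*}$. By the Hahn–Banach theorem, $S(z)=G_{D}(z)f$, which proves the theorem.
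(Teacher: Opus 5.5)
Your derivation of the formula is correct and is essentially the paper's proof. The paper also applies Theorem \ref{exact sampling} to $\Phi(t)=(F(t)-F(0))/t\in B^{2}_{\sigma}(\mathbb{R})$ and then removes the $f$-terms with the identity (\ref{identity}). The only difference is the order: the paper lifts the $\Phi$-expansion to ${\bf E}$ before using the identity, whereas you combine at the scalar level and lift at the end via Hahn--Banach. Your bound of the $k$-th term by $C\|f\|\,|k|^{-2}$ on compact sets is exactly the norm-convergence argument needed; the paper only asserts it.

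The step you flag as the main obstacle, uniformity on whole horizontal strips, cannot be carried out, because that claim is false. Take ${\bf E}=\mathbb{C}$, $D=i\sigma$, $f=1$. Then $f\in\mathbf{B}_{\sigma}(D)$ and $G_{D}(k\pi/\sigma)f=(-1)^{k}$. With $w=\sigma z/\pi$ the $k$-th term equals $\frac{w\sin\pi w}{\pi k(w-k)}$, so the tail of the series is
\[
R_{N}(w)=\frac{\sin\pi w}{\pi}\sum_{k>N}\frac{2w}{w^{2}-k^{2}}.
\]
For real $w=2N+\tfrac12$ one has $\sin\pi w=1$ and $\cot\pi w=0$. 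Using $\pi\cot\pi w=\frac1w+\sum_{k\geq1}\frac{2w}{w^{2}-k^{2}}$,
\[
\sum_{k>N}\frac{2w}{w^{2}-k^{2}}=-\frac1w-\sum_{k=1}^{N}\frac{2w}{w^{2}-k^{2}}\longrightarrow-\int_{0}^{1}\frac{4\,dx}{4-x^{2}}=-\ln 3 .
\]
Hence $\sup_{x\in\mathbb{R}}|R_{N}|\geq \frac{\ln 3}{\pi}-o(1)$, and the convergence is not even uniform on the real axis.

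In your sketch, the step that fails is the claim that cardinal series of bounded vectors converge uniformly. Here the second piece $\frac{\sin\pi w}{\pi}\sum_{|k|>N}\frac1k$ vanishes by symmetry; it is not alternating at all, since $(-1)^{k}F(k\pi/\sigma)=1$. The whole tail $R_N$ is therefore the (symmetric) tail of the cardinal series $\sum_{k}(-1)^{k}\sinc(w-k)$.

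The underlying reason is that the expansion of $h$ is uniform on strips because $h\in B^{2}_{\sigma}(\mathbb{R})$, but multiplying by the unbounded factor $z$ destroys this. The correct conclusion is norm convergence uniform on compact subsets of $\mathbb{C}$. That is what your argument, and the paper's, actually proves, so drop the strip claim rather than trying to prove it.

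A minor point: your decomposition should read $\frac{\sigma z}{k\pi}\sinc(w-k)=\sinc(w-k)\bigl(1+\frac{w-k}{k}\bigr)$; as written it is off by the factor $\sigma/\pi$.
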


       \begin{proof}
       
         If $f\in \mathbf{B}_{\sigma}(D)$ then for any $g^{*}\in {\bf E}^{*}$ the function $F(t)=\left<G_{D}(t)f,\>g^{*}\right>$ belongs to $B_{\sigma}^{\infty}(\mathbb{R})$.
We consider $\Phi\in B_{\sigma}^{2}( \mathbb{R}),$ which is defined as follows.
If $t\neq 0$ then 
\begin{equation}\label{F}
\Phi(t)=\frac{F(t)-F(0)}{t}=\left<\frac{G_{D}(t)f-f}{t},\>g^{*}\right>,
\end{equation}
and if $t=0$ then
$
\Phi(0)=\frac{d}{dt}F(t)|_{t=0}=\left<Df,\>g^{*}\right>.
$
According to Theorem \ref{exact sampling} one has the equality
$$
\Phi(z)=\left<Df,\>g^{*}\right>+\sum_{k\in \mathbb{Z}\setminus\{0\}}\Phi\left(\frac{k\pi}{\sigma}\right)\> \sinc\left(\frac{\sigma }{\pi}z-k\right),
$$
or 
$$
\left<    \frac{G_{D}(z)f-f}{z},\>g^{*}    \right>=
$$
$$
\left<Df,\>g^{*}\right>+\sum_{k\in \mathbb{Z}\setminus\{0\}}\left<\frac{ G_{D}\left(\frac{k\pi}{\sigma}\right)f-f}{\frac{k\pi}{\sigma}},\>g^{*}\right>\> \sinc\left(\frac{\sigma z}{\pi}-k\right),
$$
where the series absolutely and uniformly converges on all strips of bounded width parallel to the real axis, and in particular, on compact subsets of $\mathbb{C}$.
We note that the series
$$
\sum_{k\in \mathbb{Z}\setminus\{0\}}
\frac{G_{D}\left(\frac{k\pi}{\sigma}\right)f-f}{\frac{k\pi}{\sigma}} 
\sinc\left(\frac{\sigma z}{\pi}-k\right), 
$$
converges in the norm of ${\bf E}$ uniformly on compact subsets of $\mathbb{C}$.
It leads to the following equality on compact subsets of $\mathbb{C}$ for any $g^{*}\in {\bf E}^{*}$ 
$$
\left<    \frac{G_{D}(z)f-f}{z},\>g^{*}    \right>=
$$
$$
\left<Df+\sum_{k\in \mathbb{Z}\setminus\{0\}}
\frac{G_{D}\left(\frac{k\pi}{\sigma}\right)f-f}{\frac{k\pi}{\sigma}} 
\sinc\left(\frac{\sigma z}{\pi}-k\right),\>g^{*}\right>,\>\>\>z\in \mathbb{C},
$$
and if $z= 0$ it gives the identity
$$
\left<Df,\>g^{*}\right>=\left<Df,\>g^{*}\right>.
$$
Thus one obtains
 for every $z\in \mathbb{C}$ 
 \begin{equation}\label{samplingformulabeforeVT}
G_{D}(z)f=f+zDf \sinc\left(\frac{\sigma z}{\pi}\right)+z\sum_{ k\in \mathbb{Z}\setminus\{0\}}
\frac{G_{D}\left(\frac{k\pi}{\sigma}\right)f-f}{\frac{k\pi}{\sigma}} 
\sinc\left(\frac{\sigma z}{\pi}-k\right).
\end{equation}
By using the identity (\ref{identity}
one obtains
$$
\frac{\sigma z}{\pi}\sum_{k\in \mathbb{Z}\setminus\{0\}}\frac{1}{k} \sinc\left(\frac{\sigma z}{\pi}-k\right)=f-f \sinc\left(\frac{\sigma z}{\pi}\right),
$$
which  together with (\ref{samplingformulabeforeVT}) proves our statement.
Theorem is proven.

\end{proof}

\subsection{Weak recovery of bandlimited trajectories}

The next corollary is obvious consequence of Theorem \ref{V-T}.
\begin{corollary}\label{col}
If $D$ generates a one parameter $C_{0}$-group of isometries  and $f\in \mathbf{B}_{\sigma}(D)$ then for every functional $g^{*}\in {\bf E}^{*}$ the following sampling formula holds 
\begin{equation}\label{ws100}
\langle G_{D}(z)f, g^{*} \rangle=
\sinc\left(\frac{\sigma z}{\pi}\right)\langle f, g^{*}\rangle +
$$
$$
z\>\sinc\left(\frac{\sigma z}{\pi}\right)\left \langle Df , g^{*}\right\rangle+
\sum_{k\in \mathbb{Z}\setminus\{0\}}  
 \left(\frac{\sigma z}{k\pi}\right)    
  \sinc\left(\frac{\sigma z}{\pi}-k\right)
  \left \langle G_{D}\left(\frac{k\pi}{\sigma}\right)f , g^{*}\right\rangle,
\end{equation}
where the  series converges absolutely and  uniformly on  all strips of bounded width parallel to the real axis, and in particular, on compact subsets of $\mathbb{C}$.
\end{corollary}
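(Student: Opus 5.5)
The plan is to apply the bounded linear functional $g^{*}$ to the vector identity (\ref{s100}) of Theorem \ref{V-T}, and then to upgrade convergence to absolute convergence.

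\textbf{Applying the functional.} Fix $f\in\mathbf{B}_{\sigma}(D)$ and $g^{*}\in{\bf E}^{*}$. Theorem \ref{V-T} says the series in (\ref{s100}) converges in the norm of ${\bf E}$, uniformly on strips of bounded width. Since $g^{*}$ is continuous and linear, it commutes with the norm-convergent sum. Applying $g^{*}$ term by term therefore gives the scalar identity (\ref{ws100}). The resulting scalar series converges uniformly on the same strips, because
\[
\left|\left\langle S_{N}(z)-S(z),g^{*}\right\rangle\right|\leq\|g^{*}\|\,\|S_{N}(z)-S(z)\|,
\]
where $S_{N}$ denotes the partial sums of (\ref{s100}) and $S$ its limit.

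\textbf{Absolute convergence.} Norm convergence of the vector series gives only uniform convergence, not absolute convergence, so this claim needs a separate argument. I would go back to the scalar function $\Phi(t)=\langle (G_{D}(t)f-f)/t,\,g^{*}\rangle$ used in the proof of Theorem \ref{V-T}. It lies in $B^{2}_{\sigma}(\mathbb{R})$, so Theorem \ref{exact sampling} applies to it directly and yields absolute and uniform convergence on strips of the series $\sum_{k}\Phi(k\pi/\sigma)\,\sinc(\sigma z/\pi-k)$. Multiplying by $z$, which is bounded on compact sets, keeps absolute convergence on compacts. The same holds for the series in identity (\ref{identity}), since its terms are bounded by $|\sinc(\sigma z/\pi-k)|/|k|$. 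That series also comes from Theorem \ref{exact sampling}, applied to $\psi\in B^{2}_{\pi}$.

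\textbf{Recombining.} The $k$-th term of (\ref{ws100}) is
\[
\frac{\sigma z}{k\pi}\,\sinc\left(\frac{\sigma z}{\pi}-k\right)\left\langle G_{D}\left(\frac{k\pi}{\sigma}\right)f,g^{*}\right\rangle .
\]
This is $z$ times the $k$-th term of the $\Phi$-series plus $\langle f,g^{*}\rangle$ times the $k$-th term of the series in (\ref{identity}). A sum of two absolutely convergent series is absolutely convergent, which gives the claim.

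\textbf{Main obstacle.} The delicate point is uniformity of the absolute convergence on a whole strip rather than on compacts, because of the unbounded factor $z$. If needed I would avoid that factor by bounding the terms directly. Each term is at most
\[
\|f\|\,\|g^{*}\|\,\frac{\sigma|z|}{|k|\pi}\,\left|\sinc\left(\frac{\sigma z}{\pi}-k\right)\right| ,
\]
and
\[
|z|\,\left|\sinc\left(\frac{\sigma z}{\pi}-k\right)\right|\leq C\,\frac{|z|}{|\sigma z/\pi-k|}
\]
on a strip. For $|k|\geq 2\sigma|z|/\pi$ the tail is then bounded by roughly $\sum|k|^{-2}$. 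I would accept uniformity on compact subsets as the minimal claim and state the strip version as following from this same estimate.
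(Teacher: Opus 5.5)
Your first step is exactly the paper's proof: the paper simply calls the corollary an obvious consequence of Theorem \ref{V-T}, i.e.\ it pairs (\ref{s100}) with $g^{*}$. You go further and rightly note that norm convergence does not give absolute convergence. You split the $k$-th term as $z\,\Phi(k\pi/\sigma)\,\sinc(\frac{\sigma z}{\pi}-k)$ plus $\frac{\sigma z}{\pi}\langle f,g^{*}\rangle\frac{1}{k}\sinc(\frac{\sigma z}{\pi}-k)$; your write-up drops the factor $\frac{\sigma z}{\pi}$ on the second piece. Alternatively, use the direct bound $|\mathrm{term}_{k}|\le C|w|/k^{2}$ for $|k|\ge 2|w|$, where $w=\sigma z/\pi$ and $C$ depends on $\|f\|\,\|g^{*}\|$ and the height of the strip. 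Either way you get absolute convergence at every point and absolute, uniform convergence on compact sets. That part is correct, and the paper does not supply it.

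The gap is the strip claim, and it cannot be closed because it is false. Your estimate does not give it: the threshold $|k|\ge 2\sigma|z|/\pi$ is unbounded on a strip, so no single cutoff works there. Nothing else can give it either. Let $S_{K}(z)$ be the right side of (\ref{ws100}) with the sum restricted to $0<|k|\le K$. At $z=N\pi/\sigma$ with $N\in\mathbb{Z}$ and $|N|>K$, every term of $S_{K}(z)$ vanishes, because $\sinc N=0$ and $\sinc(N-k)=0$ for $k\neq N$. The left side, however, equals $\langle G_{D}(N\pi/\sigma)f,g^{*}\rangle$.

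For a concrete case take ${\bf E}=\mathbb{C}$, $D=0$, $f=1$ and $g^{*}$ the identity functional, so $f\in{\bf B}_{\sigma}(D)$ for every $\sigma>0$. Then (\ref{ws100}) reads $1=\sinc w+\sum_{k\neq0}\frac{w}{k}\sinc(w-k)$, and $\sup_{z\in\mathbb{R}}|1-S_{K}(z)|\ge 1$ for every $K$. So even the plain, non-absolute series does not converge uniformly on $\mathbb{R}$. The same evaluation applied to (\ref{s100}) gives $\|G_{D}(N\pi/\sigma)f-S_{K}(N\pi/\sigma)\|=\|f\|$. Hence the strip assertion of Theorem \ref{V-T}, which your first paragraph and the paper's one-line proof both inherit, is false for every $f\neq0$.

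Theorem \ref{exact sampling} is uniform on strips because the samples of a $B^{2}_{\sigma}$ function are square-summable. The unbounded factor $z$, equivalently the non-decay of $\langle G_{D}(k\pi/\sigma)f,g^{*}\rangle$, destroys this. What your argument does establish, and what is true, is that (\ref{ws100}) holds for every $z\in\mathbb{C}$, with absolute convergence at each point and absolute, uniform convergence on compact subsets of $\mathbb{C}$. You should drop the strip claim rather than assert that it follows from the same estimate.
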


The following statement can be found  in \cite{BSS-1} and in \cite{BRS}.

\begin{theorem}\label{weak sampling}
If $f\in B_{\tau }^{\infty}(\mathbb{R})$ for some $0\leq \tau<\sigma$,  then for $z\in \mathbb{C}$ one has
\begin{equation}\label{S2}
f(z)=\sum_{k\in \mathbb{Z}}  f\left(\frac{k\pi}{\sigma}\right) \sinc\left(\frac{\sigma}{\pi}z-k\right),
\end{equation}
where the series uniformly converges on compact subsets of $\mathbb{C}$.
\end{theorem}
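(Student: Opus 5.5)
The plan is to reduce the claim to the $L_{2}$ case, Theorem \ref{exact sampling}, by a ``gain of decay'' trick that uses the gap $\sigma-\tau>0$. Fix $\varepsilon$ with $0<\varepsilon<\sigma-\tau$, and for a fixed point $w\in\mathbb{C}$ consider the auxiliary function
$$
g_{w}(z)=f(z)\,\sinc\left(\frac{\varepsilon}{\pi}(z-w)\right).
$$
Since $f$ is of exponential type $\tau$ and bounded on $\mathbb{R}$, and $\sinc(\varepsilon(\cdot-w)/\pi)$ is of type $\varepsilon$ and square integrable on $\mathbb{R}$, the product $g_{w}$ is entire of type $\tau+\varepsilon<\sigma$. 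Its restriction to $\mathbb{R}$ lies in $L_{2}(\mathbb{R})$ because $|f|\le M$ there. Hence $g_{w}\in B_{\sigma}^{2}(\mathbb{R})$, and Theorem \ref{exact sampling} applies with the same sampling step $\pi/\sigma$:
$$
g_{w}(z)=\sum_{k\in\mathbb{Z}}f\left(\frac{k\pi}{\sigma}\right)\sinc\left(\frac{\varepsilon}{\pi}\left(\frac{k\pi}{\sigma}-w\right)\right)\sinc\left(\frac{\sigma}{\pi}z-k\right).
$$
Evaluating at $z=w$, where $\sinc(0)=1$, gives
$$
f(w)=\sum_{k}f\left(\frac{k\pi}{\sigma}\right)\sinc\left(\frac{\varepsilon}{\pi}\left(\frac{k\pi}{\sigma}-w\right)\right)\sinc\left(\frac{\sigma}{\pi}w-k\right).
$$
This is a convergence-improved version of (\ref{S2}) whose terms decay like $k^{-2}$.

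The second step, which is the main obstacle, is to remove the factor $\sinc(\frac{\varepsilon}{\pi}(k\pi/\sigma-w))$ and show the plain series (\ref{S2}) converges to $f(w)$ uniformly on compacts. The plain cardinal series with merely bounded samples need not converge absolutely. It does converge conditionally, however, once the terms $k$ and $-k$ are paired, that is, in symmetric partial sums, which is the intended meaning.

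To handle this, I would first prove the identity in the $L_{2}$-type situation and then pass to the limit in $f$. Set $f_{\delta}(z)=f(z)\,\sinc(\delta z/\pi)$ with $\delta<\sigma-\tau$. Then $f_{\delta}\in B_{\sigma}^{2}$, so (\ref{S2}) holds for $f_{\delta}$ with absolute convergence by Theorem \ref{exact sampling}. As $\delta\to0$ one has $f_{\delta}\to f$ uniformly on compacts. The difference of the two cardinal series is
$$
\sum_{k}f\left(\frac{k\pi}{\sigma}\right)\left(1-\sinc\left(\frac{\delta k}{\sigma}\right)\right)\sinc\left(\frac{\sigma}{\pi}z-k\right).
$$
I would estimate it using $\sinc(\frac{\sigma}{\pi}z-k)=\frac{(-1)^{k}\sin(\sigma z)}{\pi(\frac{\sigma}{\pi}z-k)}$, which turns the sum into a Hilbert-transform-like sum of bounded coefficients $c_{k}=f(k\pi/\sigma)(1-\sinc(\delta k/\sigma))$. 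The key point is that $f$, being of type $\tau<\sigma$, has samples that are ``smooth'' in $k$. More precisely, the sampled sequence $(-1)^{k}f(k\pi/\sigma)$ has a spectrum supported away from the frequency $\pi$, and summation by parts (an Abel transform) against $1/(\frac{\sigma}{\pi}z-k)$ gives convergence of the symmetric partial sums, uniformly on compacts and uniformly in $\delta$.

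Rather than carry out this harmonic analysis in full, the cleanest route, and the one I would try first, is the following. Apply the first-step formula at a fixed $\varepsilon$ and then let $\varepsilon\to0$. The weights $\sinc(\frac{\varepsilon}{\pi}(k\pi/\sigma-w))$ tend to $1$ for each $k$, and the resulting series is exactly the symmetric limit of (\ref{S2}). Justifying the interchange of the limit and the sum requires a uniform bound on the tails
$$
\sum_{|k|>N}f\left(\frac{k\pi}{\sigma}\right)\left[\sinc\left(\frac{\varepsilon}{\pi}\left(\frac{k\pi}{\sigma}-w\right)\right)-1\right]\sinc\left(\frac{\sigma}{\pi}w-k\right).
$$
I would obtain this bound by pairing $\pm k$ and using the $1/k^{2}$ cancellation of $\frac{1}{a-k}+\frac{1}{a+k}$, together with the bound $|f|\le M$. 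This is precisely where the pairing and the strict inequality $\tau<\sigma$ enter, and it is the step I expect to require the most care.
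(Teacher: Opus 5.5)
The paper gives no proof of this statement; it only quotes it from the literature, so your argument has to stand on its own. Your first step is correct: $g_w\in B^2_\sigma(\mathbb{R})$, and evaluating (\ref{S20}) at $z=w$ gives an absolutely convergent cardinal series with convergence factors.

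\textbf{The gap.} It is in removing those factors. For fixed $\varepsilon$ the bracket $\sinc\bigl(\frac{\varepsilon}{\pi}(\frac{k\pi}{\sigma}-w)\bigr)-1$ tends to $-1$ as $|k|\to\infty$. So the tails you want to bound uniformly in $\varepsilon$ are, up to an absolutely convergent correction, just the tails of (\ref{S2}) itself. Even their existence presupposes what you are proving, so this route is circular unless a new ingredient is supplied.

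The ingredient you name does not supply it. Put $a=\sigma w/\pi$, so that $\sinc(a-k)=\frac{(-1)^k\sin\pi a}{\pi(a-k)}$. The identity $\frac1{a-k}+\frac1{a+k}=\frac{2a}{a^2-k^2}$ helps only if the coefficients of the $k$-th and $(-k)$-th terms nearly coincide, but $f(k\pi/\sigma)$ and $f(-k\pi/\sigma)$ are unrelated. For the odd function $f(z)=\sin(\sigma z/2)\in B^\infty_{\sigma/2}$, pairing leaves $\frac{(-1)^k\sin\pi a}{\pi}\,\frac{2k}{a^2-k^2}\,f(k\pi/\sigma)$. This has modulus of order $1/k$ for every odd $k$, which is not summable.

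More fundamentally, no estimate that uses only $|f(k\pi/\sigma)|\le M$ can work. For the bounded sequence $c_k=(-1)^k$ ($k\ge1$), $c_k=0$ ($k\le0$), one gets $\sum_k c_k\sinc(a-k)=\frac{\sin\pi a}{\pi}\sum_{k\ge1}\frac1{a-k}$, which diverges whether or not you take symmetric partial sums. So $\tau<\sigma$ must be used on the samples themselves, not only to make room for the convergence factor.

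Your first route points at the right mechanism, but it is not carried out and it is misstated. It is $f(k\pi/\sigma)$ whose spectrum lies in $[-\pi\tau/\sigma,\pi\tau/\sigma]$, away from $\pm\pi$. Hence $(-1)^kf(k\pi/\sigma)$ has spectrum away from $0$; your version is already false for $f\equiv1$. What Abel summation against $1/(a-k)$ actually needs is that the partial sums of $(-1)^kf(k\pi/\sigma)$ stay bounded, and that must be proved. One way is to write $f(x)=u(x)+u(x-\pi/\sigma)$ with $u=f*\kappa$ bounded, which is possible because $1+e^{-i\pi\xi/\sigma}\neq0$ on $[-\tau,\tau]$. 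After that, the limit still has to be identified with $f$.

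\textbf{A direct fix.} The classical contour argument closes the gap with no convergence factor at all. Let $Q_N$ be the square with sides on $|\Re\zeta|=(N+\frac12)\pi/\sigma$ and $|\Im\zeta|=(N+\frac12)\pi/\sigma$. Residues at $\zeta=z$ and at $\zeta=k\pi/\sigma$, $|k|\le N$, give
\[
f(z)-\sum_{|k|\le N}f\left(\frac{k\pi}{\sigma}\right)\sinc\left(\frac{\sigma}{\pi}z-k\right)=\frac{\sin\sigma z}{2\pi i}\oint_{\partial Q_N}\frac{f(\zeta)\,d\zeta}{(\zeta-z)\sin\sigma\zeta}.
\]
By Phragm\'en--Lindel\"of, $|f(\zeta)|\le Me^{\tau|\Im\zeta|}$. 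On the vertical sides $|\sin\sigma\zeta|=\cosh(\sigma\Im\zeta)$, and on the horizontal sides $|\sin\sigma\zeta|\ge\sinh((N+\frac12)\pi)$. Hence, for $|z|\le R$, the right-hand side is
\[
O\bigl(N^{-1}\textstyle\int_{\mathbb{R}}e^{-(\sigma-\tau)|y|}\,dy\bigr)+O\bigl(e^{-(1-\tau/\sigma)N\pi}\bigr)=O(1/N),
\]
uniformly in $z$. This is exactly where the strict inequality $\tau<\sigma$ enters, and it yields uniform convergence on compact subsets of $\mathbb{C}$ directly.
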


\begin{remark} For  the function $f(z)\equiv 1\in B_{0 }^{\infty}(\mathbb{R})$ the series (\ref{S2}) 
takes the form

\begin{equation}\label{S3}
1=\sum_{k\in \mathbb{Z}}   \sinc\left(\frac{\sigma}{\pi}z-k\right).
\end{equation}
But this series does not converge absolutely if, for example, $\sigma=\pi$ and $z=0.5$.
Indeed, one has
$$
\sum_{k\in \mathbb{Z}} \left| \sinc(0.5-k)\right |=\sum_{k\in \mathbb{Z}}\frac{\left|\sin\left((0.5-k )\pi\right)\right|}{\left |\left(  (0.5-k)\pi\right)\right| }=\frac{1}{\pi}\sum_{k\in \mathbb{Z}}\frac{1}{|(0.5-k)|}=
$$
$$
\frac{2}{\pi}+\sum_{k\in \mathbb{Z}\setminus\{0\})}\frac{1}{|(0.5-k)|}=\frac{2}{\pi}+\sum_{k\in \mathbb{N}}\left( \frac{1}{k-0.5}+\frac{1}{k+0.5}\right)=
$$
$$
\frac{2}{\pi}+\sum_{k\in \mathbb{N}}\left( \frac{2k}{k^{2}-0.25}\right)\geq\frac{2}{\pi}+2\sum_{k\in \mathbb{N}}\frac{1}{k}>\infty.
$$
This example shows that in general the series (\ref{S2}) does not converge absolutely.

\end{remark}

Since for every $f\in {\bf B}_{\sigma}(D)$ and every $g^{*}\in {\bf E}^{*}\>$ the scalar function $\langle G_{D}(t)f,\>g^{*}\rangle=\langle e^{tD}f, g^{*}\rangle$ belongs to the regular space $B_{\sigma}^{\infty}(\mathbb{R})$, 
 Theorem \ref{weak sampling} implies our next sampling result. 
 
\begin{theorem}\label{ws200}

If $D$ generates a one-parameter $C_{0}$-group of isometries,  and $f_{0}\in {\bf B}_{\tau}(D),\>\>0\leq\tau<\sigma$, then for every $g^{*}\in {\bf E}^{*}\>$
\begin{equation}\label{ws200}
\left \langle G_{D}(z)f_{0},\>g^{*}\right\rangle=\sum_{k\in \mathbb{Z}}
\left\langle G_{D}\left (\frac{k\pi}{\sigma}\right)f_{0},\>g^{*}\right\rangle 
\sinc \left(\frac{\sigma}{\pi}z -k\right),\>\>z\in \mathbb{C},
\end{equation}
where the series converges uniformly on compact subsets of $\mathbb{C}$.
\end{theorem}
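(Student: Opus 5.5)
The plan is to reduce the statement to the scalar result, Theorem \ref{weak sampling}, applied for each fixed functional. Fix $g^{*}\in {\bf E}^{*}$ and set $F(z)=\langle G_{D}(z)f_{0}, g^{*}\rangle$, where $G_{D}(z)f_{0}=e^{zD}f_{0}$ is the entire extension given by Theorem \ref{exp-solution}. The goal is to show $F\in B_{\tau}^{\infty}(\mathbb{R})$. Then Theorem \ref{weak sampling}, with $0\leq\tau<\sigma$, gives
$$
F(z)=\sum_{k\in\mathbb{Z}}F\left(\frac{k\pi}{\sigma}\right)\sinc\left(\frac{\sigma}{\pi}z-k\right),
$$
converging uniformly on compact subsets of $\mathbb{C}$. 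Since $F(k\pi/\sigma)=\langle G_{D}(k\pi/\sigma)f_{0},g^{*}\rangle$, this is exactly (\ref{ws200}).

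To verify $F\in B_{\tau}^{\infty}(\mathbb{R})$, I would proceed in three steps.

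\textbf{Entire.} Part (1) of Theorem \ref{exp-solution} gives
$$
F(z)=\sum_{k}\frac{z^{k}}{k!}\langle D^{k}f_{0},g^{*}\rangle,
$$
and this series converges on all of $\mathbb{C}$ because $|\langle D^{k}f_{0},g^{*}\rangle|\leq \tau^{k}\|f_{0}\|\|g^{*}\|$. Hence $F$ is entire.

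\textbf{Exponential type $\tau$.} The same coefficient bound gives $|F(z)|\leq \|f_{0}\|\|g^{*}\|e^{\tau|z|}$. Alternatively, part (2) of Theorem \ref{exp-solution} gives the sharper bound $|F(z)|\leq\|f_{0}\|\|g^{*}\|e^{\tau|\Im z|}$.

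\textbf{Bounded on $\mathbb{R}$.} For real $t$, $G_{D}(t)$ is an isometry, so $|F(t)|\leq\|f_{0}\|\|g^{*}\|$.

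The case $\tau=0$ needs a brief separate look. Then $Df_{0}=0$, so $F$ is constant, and Theorem \ref{weak sampling} still applies since it allows $\tau=0$; equivalently, the identity (\ref{S3}) covers it.

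There is no serious obstacle. The only point needing care is that the extension $G_{D}(z)f_{0}$ is defined through the power series (\ref{ext}), so evaluating at the real sample points $k\pi/\sigma$ must agree with the group operators $G_{D}(k\pi/\sigma)f_{0}$. This agreement is the identity (\ref{ext to Z}) established in the proof of Theorem \ref{exp-solution}. The convergence mode claimed in the statement, uniform on compact sets, is inherited directly from Theorem \ref{weak sampling}; absolute convergence is not claimed, consistent with the preceding remark.
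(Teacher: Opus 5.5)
Your proposal is correct and follows essentially the same route as the paper. The paper's one-line argument observes that $\langle G_{D}(t)f_{0},g^{*}\rangle$ lies in $B^{\infty}_{\tau}(\mathbb{R})$ and invokes Theorem~\ref{weak sampling}; your checks (entireness, type $\tau$, boundedness on $\mathbb{R}$ via the isometry property, the case $\tau=0$, and agreement of the series extension with $G_{D}(k\pi/\sigma)f_{0}$ at the sample points) only make explicit what the paper leaves implicit.
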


\section{Non-stationary equations in Banach spaces}

\subsection{The  abstract Schr\"{o}dinger-type equation in a Banach space ${\bf E}$}\label{Schrod-eq}

 Let  $D$ be a generator of one-parameter $C_{0}$-group $G_{D}(t), \>t\in \mathbb{R}$ of isometries. 
The corresponding    abstract Cauchy problem   associated with a  Schr\"{o}dinger-type abstract equation is the following.
 
 Find a vector valued function $ f:\mathbb{R}\mapsto  {\bf E},$ which satisfies the Schr\"{o}dinger equation
\begin{equation}\label{Seq00001}
\frac{d}{dt}f(t)=Df(t),\>\>
\end{equation}
and the initial condition
\begin{equation}\label{SC00001}
f(0)=f_{0}\in  \mathcal{D}(D),
\end{equation}
where 
the derivative with respect to $t$ is understood as
\begin{equation}\label{t-deriv}
\frac{d}{dt}f(t)=\lim_{\tau\rightarrow 0}\tau^{-1}\|f(t+\tau)-f(t)\|.
\end{equation}
The unique classical solution to this Cauchy problem is given by the formula $f(t)=G_{D}(t)f_{0}$.  In this section we will be interested in  Cauchy problems whose initial value belongs to a a corresponding Bernstein space, i.e.
\begin{equation}\label{Bernstein-initial-groups} 
f(0)=f_{0}\in {\bf B}_{\sigma}(D).
\end{equation}

It is shown in Theorem \ref{exp-solution} that when  $f_{0}\in         \mathbf{B}_{\sigma}(D)$ the trajectory $G_{D}(t)f_{0}: \mathbb{R}\mapsto {\bf B}_{\sigma}(D)$ is given by the formula
\begin{equation}
G_{D}(t)f_{0}=e^{itD}f_{0},
\end{equation}
and represents a vector valued function which has extension $G_{D}(z)f_{0}=e^{-izD}f_{0},\>z\in \mathbb{C}$ to $\mathbb{C}$ as entire function of exponential type $\sigma$ bounded on the real line.
For such bandlimited trajectories  the following theorem holds which is a consequence of Theorems \ref{V-T} and \ref{ws200} and Corollary \ref{col}.

\begin{theorem}\label{group-reconstruction}
If $D$ generates a one-parameter $C_{0}$-group of isometries  and $f\in \mathbf{B}_{\sigma}(D)$, 
then the unique solution of the Cauchy problem (\ref{Seq00001}), (\ref{Bernstein-initial-groups} ) has extension to $\mathbb{C}$ as an entire function $e^{zD}f_{0}$ of  exponential type $\sigma$. In this case the sampling and interpolation formulas (\ref{s100}), (\ref{ws100}), and (\ref{ws200}) hold.

\end{theorem}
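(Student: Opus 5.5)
The plan is to read Theorem \ref{group-reconstruction} as a packaging of earlier results, so the proof reduces to identifying the solution and checking hypotheses.

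First, I will verify that $f(t)=G_{D}(t)f_{0}$ is the unique classical solution of (\ref{Seq00001}), (\ref{Bernstein-initial-groups}). Existence is immediate, since $f_{0}\in{\bf B}_{\sigma}(D)\subset\mathcal{D}(D)$ and $\frac{d}{dt}G_{D}(t)f_{0}=DG_{D}(t)f_{0}$. For uniqueness I will use the standard argument. If $u$ is any solution with $u(0)=f_{0}$, I set $v(s)=G_{D}(t-s)u(s)$ for $s\in[0,t]$. Differentiating gives $v'(s)=-G_{D}(t-s)Du(s)+G_{D}(t-s)Du(s)=0$, because $u(s)\in\mathcal{D}(D)$ and $G_{D}$ commutes with $D$ on $\mathcal{D}(D)$. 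Hence $u(t)=v(t)=v(0)=G_{D}(t)f_{0}$. The same argument works for negative $t$, since $G_{D}$ is a group.

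Second, I will apply Theorem \ref{exp-solution}, which is valid because a group of isometries is uniformly bounded. It shows that $G_{D}(t)f_{0}$ extends to the entire function $G_{D}(z)f_{0}=\sum_{k}\frac{z^{k}}{k!}D^{k}f_{0}=e^{zD}f_{0}$. This extension satisfies $\|e^{zD}f_{0}\|\le\|f_{0}\|e^{\sigma|\Im z|}$, so it has exponential type $\sigma$ and is bounded by $\|f_{0}\|$ on $\mathbb{R}$.

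Third, for the sampling and interpolation formulas I will invoke the earlier results directly. Formula (\ref{s100}) is Theorem \ref{V-T} with $f=f_{0}\in{\bf B}_{\sigma}(D)$, and formula (\ref{ws100}) is Corollary \ref{col}. Formula (\ref{ws200}) is Theorem \ref{ws200}, which requires $f_{0}\in{\bf B}_{\tau}(D)$ with $\tau<\sigma$. I will state this oversampling condition explicitly. It can always be arranged: if $f_{0}\in{\bf B}_{\sigma}(D)$, then $f_{0}\in{\bf B}_{\sigma'}(D)$ for every $\sigma'>\sigma$, since the Bernstein inequalities are monotone in $\sigma$. So (\ref{ws200}) holds with the sampling step $\pi/\sigma'$ for any $\sigma'>\sigma$.

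The main obstacle is therefore interpretive rather than technical. The statement "(\ref{ws200}) holds" must be read either with the strict inequality $\tau<\sigma$ or with the sampling rate adjusted as above. Apart from that, the uniqueness argument is the only genuinely new ingredient, and it is standard.
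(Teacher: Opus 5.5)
Your proposal is correct and follows the paper's route: the theorem is obtained directly from Theorem~\ref{exp-solution}, Theorem~\ref{V-T}, Corollary~\ref{col} and the weak sampling theorem behind (\ref{ws200}), with uniqueness of the solution $G_{D}(t)f_{0}$ taken as standard in the paper and supplied by you through the usual argument. Your caveat about (\ref{ws200}) is also right: that formula genuinely needs $f_{0}\in{\bf B}_{\tau}(D)$ with $\tau<\sigma$ (equivalently, sampling at $k\pi/\sigma'$ with $\sigma'>\sigma$), since at the critical rate it fails, e.g.\ whenever $\langle G_{D}(t)f_{0},g^{*}\rangle=\sin\sigma t$, whereas (\ref{s100}) and (\ref{ws100}) do hold at the rate $\pi/\sigma$ itself.
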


\subsection{The abstract diffusion equation in a Banach space $\bf E$}\label{heat-eq}

Let $A$ be an operator  on $\EB$ that  generates  a  one parameter contraction $C_{0}$- semigroup 
$$
S_{A}: \mathbb{R}_{+}\mapsto \mathcal{B}(\EB).
$$ 
 It means that 
\begin{enumerate}
\item $S_{A}(t_{1}+t_{2})=S_{A}(t_{1})S_{A}(t_{2}),\>\>t_{1}, t_{2}\in \mathbb{R}_{+},$
\item $S_{A}(0)=I,\>\>(I - is \>the \>identity \>operator),$

\item  $\|S_{A}(t)\|\leq1,\>\>t\in \mathbb{R}_{+}$,
\item $\lim_{t\rightarrow 0+}\|S_{A}(t)-f\|=0,\>\>f\in \EB$.
\end{enumerate}

The Cauchy problem for the abstract  diffusion-type equation is formulated as follows: find a vector-valued function $h:\mathbb{R}_{+}\mapsto  {\bf E}$ which satisfies the equation 

\begin{equation}\label{Heq00001}
\frac{d}{d\tau}h(\tau)=Ah(\tau),\>\>
\end{equation}
and the initial condition 
\begin{equation}\label{HC00001}
h(0)=h_{0}\in  \mathcal{D}(A),                 
\end{equation}
where the derivative with respect to $\tau$ is understood as in (\ref{t-deriv}).

The unique classical solution to the Cauchy problem  (\ref{Heq00001}),  (\ref{HC00001}) is given by the formula $f(t)=S_{A}(t)f_{0}$.  
We will be interested in  Cauchy problems whose initial value belongs to a corresponding Bernstein space.
However, 
if $P $ generates in ${\bf E}$ a strongly continuous bounded semigroup $T_{P}$ (but not a group) then the set  $\bigcup_{\sigma\geq 0}\mathbf{B}_{\sigma}(P)$ may not be  dense in ${\bf E}$. 

\begin{counterexample} Consider a strongly continuous bounded semigroup $T_{P}(t)$ in $L_{2}(0,\infty)$ defined for every $f\in L_{2}(0,\infty)$ as $T_{P}(t)f(x)=f(x-t),$ if $\>x\geq t$ and $T_{P}(t)f(x)=0,$ if $\>\>0\leq x<t$.  Bernstein inequalities  imply  that if $ f\in \mathbf{B}_{\sigma}(P)$ then for any $g\in L_{2}(0,\infty)$ the function $\left<T_{P}(t)f,\>g\right>$ is analytic in $t$. Thus if $g$ has compact support then $\left<T_{P}(t)f,\>g\right>$ is zero for  all   $t\geq 0$, which implies that $f$ is zero. In other words, in this case every space $\mathbf{B}_{\sigma}(P)$ is trivial. 
\end{counterexample}

This example shows that if $A$ generates a semigroup and one wants  the set $\bigcup_{\sigma>0}{\bf B}_{\sigma}(A)$ to be dense in ${\bf E}$, then some additional restrictions on the operator $A$ must be imposed.  Our additional conditions are the following.
\begin{assumption}\label{assumption11}
We consider an operator $A$ which generates a contraction $C_{0}$-semigroup 
$S_{A}(t), \>\|S_{A}(t)\|\leq 1,\>0\leq t<\infty,$  and is  such that for some $\phi\in [ -\pi, \pi)$ the operator $e^{i\phi}A$ generates  a $C_{0}$-group of isometries $G_{e^{i\phi}A}(t),\>t\in \mathbb{R}$.  
\end{assumption}

Now, an analog of Definition \ref{Bernstein space} can be used to define ${\bf B}_{\sigma}(A)$. Since $e^{i\phi}A$ generates a group and 
$$
{\bf B}_{\sigma}(A)={\bf B}_{\sigma}(e^{i\phi}A),
$$
the union $\bigcup_{\sigma>0}{\bf B}_{\sigma}(A)$ is dense in ${\bf E}$.
In this section we will assume that 
\begin{equation}\label{Bernstein-initial-semigroups} 
h(0)=h_{0}\in {\bf B}_{\sigma}(A).
\end{equation}
The proof of the next lemma for semigroups is the same as the proof of Theorem \ref{exp-solution} for groups.

\begin{theorem}\label{exp-solution-semigroup}
Suppose that $A$ generates a one-parameter contraction $C_{0}$-semigroup $S_{A}$ with $ \|S_{A}(t)\|\leq 1, \>t\geq 0$, and  $h_{0}\in {\bf B}_{\sigma}(A)$.

Then the following properties hold.

 \begin{enumerate}

 \item The trajectory $S_{A}(t)h_{0},\>\>t\in \mathbb{R}_{+}\cup \{0\},$ has extension to the complex plain as an abstract valued entire function

\begin{equation}\label{ext}
S_{A}(z)h_{0}=\sum_{k=0}^{\infty}\frac{z^{k}}{k!}D^{k}h_{0}=e^{zD}h_{0},\>\>z\in \mathbb{C},\>\>h_{0}\in  {\bf B}_{\sigma}(A).
\end{equation}
where the series converges uniformly on compact subsets of $\mathbb{C}$.

\item The extended trajectory $S_{A}(z)h_{0}$ has exponential type, i.e.
$$
\|S_{A}(z)h_{0}\|\leq \|h_{0}\|e^{\sigma\left| z\right|},\>\>z\in \mathbb{C}.
$$

\item If $\Re\>z\geq0$ then
 \begin{equation}\label{Im}
\|S_{A}(z)h_{0}\|\leq \|h_{0}\|e^{\sigma\left|\Im\> z\right|}.
\end{equation}

\end{enumerate}

\end{theorem}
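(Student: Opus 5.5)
Items (1) and (2) follow the proof of Theorem \ref{exp-solution} with $G_{D}$ replaced by $S_{A}$ and $D$ by $A$; the series in (\ref{ext}) should be read with $A$ in place of $D$. The derivation of item (3) is new: it uses Assumption \ref{assumption11}, since contractivity of $S_{A}$ alone controls only the real direction.

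\textbf{Steps (1) and (2).} Since $h_{0}\in{\bf B}_{\sigma}(A)$, we have $h_{0}\in\mathcal{D}^{\infty}(A)$ and $\|A^{k}h_{0}\|\leq\sigma^{k}\|h_{0}\|$. Hence the series $\sum_{k}\frac{z^{k}}{k!}A^{k}h_{0}$ is dominated by $\sum_{k}\frac{(|z|\sigma)^{k}}{k!}\|h_{0}\|=e^{\sigma|z|}\|h_{0}\|$. It therefore converges uniformly on compact subsets of $\mathbb{C}$ and defines an entire ${\bf E}$-valued function; the same bound is item (2). To identify this function with $S_{A}(t)h_{0}$ for $t\geq0$, fix $f^{*}\in{\bf E}^{*}$ and write the Taylor formula with integral remainder (\ref{Taylor1}) for $F(t)=\langle S_{A}(t)h_{0},f^{*}\rangle$ on $[0,t]$. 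We have $F^{(n)}(s)=\langle S_{A}(s)A^{n}h_{0},f^{*}\rangle$. Because $\|S_{A}(s)\|\leq1$, the remainder has norm at most $\frac{t^{n}\sigma^{n}}{n!}\|h_{0}\|\to0$. By Hahn--Banach the vector identity follows on $\mathbb{R}_{+}$.

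\textbf{Step (3).} Put $B=e^{i\phi}A$, which generates the isometric group $G_{B}$. From the definitions, ${\bf B}_{\sigma}(A)={\bf B}_{\sigma}(B)$. By Theorem \ref{exp-solution}, $G_{B}(w)h_{0}=\sum_{k}\frac{w^{k}}{k!}B^{k}h_{0}$ is entire with $\|G_{B}(w)h_{0}\|\leq\|h_{0}\|e^{\sigma|\Im w|}$. Comparing power series gives $S_{A}(z)h_{0}=G_{B}(e^{-i\phi}z)h_{0}$ for all $z\in\mathbb{C}$. This is the generalized Wick rotation (\ref{our-trans}). Consequently
\[
\|S_{A}(z)h_{0}\|\leq\|h_{0}\|e^{\sigma|\Im(e^{-i\phi}z)|}.
\]

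\textbf{Main obstacle.} The inequality just obtained gives item (3) directly only when $\phi\in\{0,-\pi\}$. I therefore expect the main difficulty to be getting the bound $e^{\sigma|\Im z|}$ on the closed right half-plane for a general $\phi$.

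To handle it, I would split $z=x+iy$ with $x\geq0$ and use the semigroup property of the entire extension, $S_{A}(x+iy)h_{0}=S_{A}(x)S_{A}(iy)h_{0}$. This holds because both sides are entire in each variable and agree for real arguments by the semigroup law. Contractivity then gives $\|S_{A}(z)h_{0}\|\leq\|S_{A}(iy)h_{0}\|$, so everything reduces to the imaginary axis.

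On that axis, the plan is to show that $A$ itself essentially generates a group there. Contractivity of $S_{A}$ together with the isometric group $G_{B}$ should force $e^{i\phi}$ to be real, or $iA$ to be dissipative in both directions, which would make $y\mapsto S_{A}(iy)h_{0}$ bounded by $\|h_{0}\|$. If that fails, I would fall back on Phragmén--Lindelöf in the quadrants $\{x\geq0,\ y\geq0\}$ and $\{x\geq0,\ y\leq0\}$. The function $e^{\pm i\sigma z}\langle S_{A}(z)h_{0},f^{*}\rangle$ has exponential type there by item (2). It is bounded by $\|h_{0}\|\|f^{*}\|$ on the positive real axis by contractivity. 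On the imaginary axis it would be controlled through the rotated group bound above, and the result would then follow for all $f^{*}$ by Hahn--Banach.
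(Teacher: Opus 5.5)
Your Steps (1) and (2) are fine and agree with the paper, whose proof is simply ``repeat the proof of Theorem~\ref{exp-solution} with $S_A$, $A$ in place of $G_D$, $D$''. In item (3), your reduction $\|S_A(x+iy)h_0\|\le\|S_A(iy)h_0\|$ for $x\ge 0$ (semigroup law for the extension plus $\|S_A(x)\|\le 1$) is also exactly the paper's step.

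The gap is in how you close item (3). You treat the imaginary axis as the main obstacle and bring in Assumption~\ref{assumption11}, but the Bernstein inequalities alone suffice there. Item (2) at $z=iy$ is the termwise estimate
\[
\|S_A(iy)h_0\|=\Bigl\|\sum_{k\ge 0}\frac{(iy)^k}{k!}A^kh_0\Bigr\|\le\sum_{k\ge 0}\frac{|y|^k\sigma^k}{k!}\|h_0\|=e^{\sigma|y|}\|h_0\|,
\]
which is the last line of the proof of Theorem~\ref{exp-solution}. It finishes item (3) at once: contractivity handles the real direction and the Bernstein inequalities handle the imaginary one. Moreover, Assumption~\ref{assumption11} is not a hypothesis of this theorem, so an argument that genuinely depended on it would not prove the statement as given.

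Your primary plan on the imaginary axis is also false. Contractivity of $S_A$ together with an isometric group $G_{e^{i\phi}A}$ forces neither $e^{i\phi}\in\mathbb{R}$ nor $\|S_A(iy)h_0\|\le\|h_0\|$. Take $A=e^{i\pi/4}\Delta$ in $L_2(\mathbb{R}^d)$ and $\phi=\pi/4$. Then $S_A(t)$ is the Fourier multiplier $e^{-te^{i\pi/4}|\xi|^2}$, a contraction semigroup, and $e^{i\phi}A=i\Delta$ generates the unitary Schr\"odinger group. Yet $S_A(iy)$ multiplies by a function of modulus $e^{y|\xi|^2/\sqrt{2}}$, so $\|S_A(iy)h_0\|>\|h_0\|$ for every $y>0$ and every nonzero $h_0\in{\bf B}_\sigma(A)$.

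Your Phragm\'en--Lindel\"of fallback would work, but it is superfluous. The imaginary-axis bound you would feed into it, either $e^{\sigma|y\cos\phi|}$ from the rotated group or $e^{\sigma|y|}$ from item (2), already yields item (3) directly when combined with your reduction.
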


Since $A$ generates a bounded strongly continues semigroup  $S_{A}(t), \>t\geq 0,$ the unique classical solution to the problem \ref {Heq00001}, \ref{Bernstein-initial-semigroups}   is given by the formula 
$$
h(\tau)=S_{A}(\tau)h_{0},\>h_{0}\in \mathcal{D}(A),\>\tau\geq 0.
$$
 It follows from Lemma \ref{exp-solution-semigroup} that when  $h_{0}\in \mathbf{B}_{\sigma}(A)$ the trajectory $h(\tau),\>\tau\geq 0,$ is given by the formula
$$
S_{A}(\tau)h_{0}=\sum_{k}\frac{\tau^{k}}{k!}A^{k}h_{0},\>\>h_{0}\in \mathbf{B}_{\sigma}(A),\>\tau\geq 0,
$$
and represents a vector valued function which has extension to $\mathbb{C}$
\begin{equation}\label{exp-semigroup}
S_{A}(\xi)h_{0}=\sum_{k}\frac{\xi^{k}}{k!}A^{k}h_{0}=e^{\xi A}h_{0},\>\>h_{0}\in \mathbf{B}_{\sigma}(A),\>\xi \in \mathbb{C},
\end{equation}
 as an entire function of exponential type $\sigma$. One also has 
\begin{equation}\label{exp-group}
G_{e^{i\phi }A}(z)h_{0}=\sum_{k}\frac{  \left(ze^{i\phi}\right)^{k}  }{k!}A^{k}h_{0}=e^{ze^{i\phi}A}h_{0},\>\>h_{0}\in \mathbf{B}_{\sigma}(e^{i\phi}A),\>\>z\in \mathbb{C}.
\end{equation}
Comparing (\ref{exp-semigroup}) and (\ref{exp-group}) we conclude that when
\begin{equation}\label{Wick}
z=\xi e^{-i\phi},\>\>z,\xi \in \mathbb{C},
\end{equation}
then  for every $h_{0}\in \mathbf{B}_{\sigma}(A)=\mathbf{B}_{\sigma}(e^{i\phi}A)$ the following equalities hold 
$$
G_{e^{i\phi }A}(z)h_{0}=\sum_{k}\frac{z^{k}}{k!}e^{i k \phi}A^{k}h_{0}=\sum_{k}\frac{\xi^{k}}{k!}A^{k}h_{0}=S_{A}(\xi)h_{0}.
$$
Thus we established the following relations.
\begin{lemma}\label{group-semigroup}
If the assumption \ref{assumption11} holds , then for every $h_{0}\in \mathbf{B}_{\sigma}(A)=\mathbf{B}_{\sigma}(e^{i\phi}A)$ the following holds true.

\begin{enumerate}

\item The vector valued functions $G_{e^{i\phi }A}(t)h_{0},\>\>t\in \mathbb{R},$ and $S_{A}(\tau)h_{0},\>\>\tau\geq 0,$
have extensions to $\mathbb{C}$ as entire functions of exponential type $\sigma$.

\item The following formulas hold
$$
G_{e^{i\phi }A}(z)h_{0}= S_{A}\left(ze^{i\phi}\right)h_{0}, \>\>\>z\in \mathbb{C},
$$
or
$$
S_{A}(\xi )h_{0}=G_{e^{i\phi }A}(\xi e^{-i\phi})h_{0},\>\>\xi\in \mathbb{C}.
$$
\end{enumerate}
\end{lemma}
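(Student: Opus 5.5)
Throughout, fix $h_{0}\in\mathbf{B}_{\sigma}(A)$. The plan has three steps: identify the two Bernstein spaces, build the entire extensions of both trajectories from the power series, and match the coefficients of the two series.

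\emph{Step 1: $\mathbf{B}_{\sigma}(A)=\mathbf{B}_{\sigma}(e^{i\phi}A)$.} The operator $e^{i\phi}A$ has the same domain as $A$, and $(e^{i\phi}A)^{k}=e^{ik\phi}A^{k}$ on $\mathcal{D}^{k}(A)$. Since $|e^{ik\phi}|=1$, we get $\|(e^{i\phi}A)^{k}h_{0}\|=\|A^{k}h_{0}\|$, so the Bernstein inequalities for the two operators are literally the same conditions.

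\emph{Step 2: entire extensions (item (1)).} For the group trajectory, apply Theorem \ref{exp-solution} to the isometry group generated by $D=e^{i\phi}A$ (Assumption \ref{assumption11}). This gives $G_{e^{i\phi}A}(z)h_{0}=\sum_{k}\frac{(ze^{i\phi})^{k}}{k!}A^{k}h_{0}$. The series converges locally uniformly, and the extension satisfies $\|G_{e^{i\phi}A}(z)h_{0}\|\le\|h_{0}\|e^{\sigma|\Im z|}$, so it has exponential type $\sigma$. For the semigroup trajectory, use Theorem \ref{exp-solution-semigroup}. Its proof repeats the Taylor-remainder argument of Theorem \ref{exp-solution}: pair with a functional $f^{*}$ and write
$$\langle S_{A}(\tau)h_{0},f^{*}\rangle=\sum_{k<n}\frac{\tau^{k}}{k!}\langle A^{k}h_{0},f^{*}\rangle+\frac{1}{(n-1)!}\int_{0}^{\tau}(\tau-s)^{n-1}\langle S_{A}(s)A^{n}h_{0},f^{*}\rangle ds .$$
The contraction bound $\|S_{A}(s)\|\le1$ controls the remainder by $\frac{(\sigma\tau)^{n}}{n!}\|h_{0}\|\,\|f^{*}\|\to0$. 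This yields $S_{A}(\tau)h_{0}=\sum_{k}\frac{\tau^{k}}{k!}A^{k}h_{0}$ for $\tau\ge0$. The same series, with $\tau$ replaced by $\xi\in\mathbb{C}$, is dominated by $\sum_{k}\frac{(\sigma|\xi|)^{k}}{k!}\|h_{0}\|$. Hence it defines an entire function with $\|S_{A}(\xi)h_{0}\|\le\|h_{0}\|e^{\sigma|\xi|}$, which is exponential type $\sigma$.

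\emph{Step 3: the identities (item (2)).} Substitute $\xi=ze^{i\phi}$ into the series for $S_{A}(\xi)h_{0}$. This gives termwise $\sum_{k}\frac{z^{k}e^{ik\phi}}{k!}A^{k}h_{0}$, which is exactly the series for $G_{e^{i\phi}A}(z)h_{0}$. Both series converge absolutely, so $G_{e^{i\phi}A}(z)h_{0}=S_{A}(ze^{i\phi})h_{0}$ for all $z\in\mathbb{C}$. The second formula follows by setting $z=\xi e^{-i\phi}$. One could instead argue by uniqueness of analytic continuation: the two entire functions $z\mapsto G_{e^{i\phi}A}(z)h_{0}$ and $z\mapsto S_{A}(ze^{i\phi})h_{0}$ have the same Taylor coefficients at $0$, namely $e^{ik\phi}A^{k}h_{0}$, so they coincide. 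The direct comparison of series is simpler.

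\emph{Main obstacle.} The only real step is the first part of Step 2: showing the semigroup orbit on $[0,\infty)$ actually equals the power series. This is what justifies calling the series an \emph{extension} of the trajectory $S_{A}(\tau)h_{0}$ and not just some entire function. It needs the Taylor-remainder argument with the contraction bound. The argument is restricted to $\tau\ge0$, which is all that is required here.
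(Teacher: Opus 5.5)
Your proposal is correct and takes essentially the paper's route: it also writes $S_{A}(\xi)h_{0}=\sum_{k}\frac{\xi^{k}}{k!}A^{k}h_{0}$ (Theorem \ref{exp-solution-semigroup}, proved like Theorem \ref{exp-solution}) and $G_{e^{i\phi}A}(z)h_{0}=\sum_{k}\frac{(ze^{i\phi})^{k}}{k!}A^{k}h_{0}$, then compares the two series under $z=\xi e^{-i\phi}$. The only difference is that you write out the semigroup Taylor-remainder estimate, which the paper just says is the same as in the group case.
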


This fact  allows to obtain certain reconstruction formulas for solutions of the Cauchy problem \ref {Heq00001},\ref{Bernstein-initial-semigroups}  with bandlimited initial data. For example, consider the formula (\ref{s100}). For the  bounded group   $G_{e^{i\phi }A}(z)h_{0}$ with $h_{0}\in {\bf B}_{\omega}(e^{i\phi}A),\>\>0<\omega<\sigma,$  it will take the form 

\begin{equation}\label{group sampling 200}
G_{e^{i\psi}A}(z)h_{0}=\sinc\left(\frac{\sigma z}{\pi}\right)h_{0}+
$$
$$
z\>\sinc\left(\frac{\sigma z}{\pi}\right)e^{i\psi}Ah_{0} +
\sum_{k\neq 0}   \left(\frac{\sigma z}{k\pi}\right)     \sinc\left(\frac{\sigma z}{\pi}-k\right)G_{e^{i\psi}A}\left(\frac{k\pi}{\sigma}\right)h_{0} ,
\end{equation}
where the  series converges in the norm of ${\bf E}$ uniformly on every horizontal strip of finite width in $\mathbb{C}$. 

Thus for $z=\xi e^{-i\phi},\>\>\xi, z \in \mathbb{C},$ by using Lemma \ref{group-semigroup} we obtain a reconstruction formula for $\left \langle S_{A}(\tau)h_{0},\>g^{*}\right\rangle$
in terms of the samples of the group trajectory  $\left\langle G_{ e^{i\phi} A} \left(\frac{k\pi}{\sigma}\right)h_{0}, g^{*}\right\rangle$:

\begin{equation}\label{group sampling 2000}
S_{A}(\xi)h_{0}=\sinc\left(\frac{\sigma \xi e^{-i\phi}}{\pi}\right)h_{0}+
$$
$$
\xi \>\sinc\left(\frac{\sigma \xi e^{-i\phi}}{\pi}\right)Ah_{0} +
\sum_{k\neq 0}   \left(\frac{\sigma \xi e^{-i\phi}}{k\pi}\right)     \sinc\left(\frac{\sigma \xi e^{-i\phi}}{\pi}-k\right)G_{e^{i\psi}A}\left(\frac{k\pi}{\sigma}\right)h_{0} ,
\end{equation}
where the  series converges in the norm of ${\bf E}$ uniformly on every horizontal strip of finite width in $\mathbb{C}$. 

This formula can also be rewritten as 
\begin{equation}\label{group sampling 20000}
S_{A}(\xi)h_{0}=\sinc\left(\frac{\sigma \xi e^{-i\phi}}{\pi}\right)h_{0}+
$$
$$
\xi \>\sinc\left(\frac{\sigma \xi e^{-i\phi}}{\pi}\right)Ah_{0} +
\sum_{k\neq 0}   \left(\frac{\sigma \xi e^{-i\phi}}{k\pi}\right)     \sinc\left(\frac{\sigma \xi e^{-i\phi}}{\pi}-k\right)S_{A}\left(e^{i\phi}\frac{k\pi}{\sigma}\right)h_{0} ,
\end{equation}
where the  series converges in the norm of ${\bf E}$ uniformly on every horizontal strip of finite width. 
Thus we have the following theorem.

\begin{theorem}\label{semigroup reconstruction}
Let $A$ be an operator in ${\bf E}$ for which the Assumption \ref{assumption11} holds true.
Then for the unique solution of the Cauchy problem (\ref{Heq00001}), (\ref{Bernstein-initial-semigroups} ) 
 has extension to $\mathbb{C}$ as an entire function $e^{zD}h_{0}$ of  exponential type $\sigma$. Then the  sampling and interpolation formula (\ref{group sampling 2000}) holds. Moreover, 
the following sampling and interpolation formula for every $g^{*}\in {\bf E}^{*}$ holds true as well

\begin{equation}\label{s3}
\langle S_{A}(\xi)h_{0}, g^{*}\rangle=
\sinc\left(e^{-i\phi}\frac{\sigma \xi }{\pi}\right)\langle h_{0}, g^{*}\rangle+
\xi \>\sinc\left(e^{-i\phi}\frac{\sigma \xi}{\pi}\right) \langle Ah_{0}, g^{*}\rangle +
$$
$$
\sum_{k\in \mathbb{Z}\setminus\{0\}}   \left(e^{-i\phi}\frac{\sigma \xi}{k\pi}\right)     \sinc\left(e^{-i\phi}
\frac{\sigma \xi}{\pi}-k\right)\left \langle S_{A}\left (e^{i\phi}\frac{k\pi}{\sigma}\right)h_{0}, g^{*}\right\rangle,
\end{equation}
where convergence is absolute and uniform on compact subsets of  $\mathbb{C}$.

\item In addition, if   the initial value $h_{0}$ belongs to 
${\bf B}_{\tau}(D)$ where $0<\tau<\sigma$, then for any $g^{*}\in {\bf E}$ 

\begin{equation}\label{wsf-2}
\langle S_{A}(z)h_{0},\>g^{*}\rangle=\sum_{k\in \mathbb{Z}}\left\langle S_{R}\left(\frac{k\pi}{\sigma}\right)h_{0},\>g^{*}\right\rangle \sinc \left(\frac{\sigma}{\pi}z -k\right),\>\>z\in \mathbb{C},
\end{equation}
where the series converges uniformly on compact subsets of $\mathbb{C}$.

\end{theorem}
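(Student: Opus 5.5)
The plan is to reduce everything to the group $G_{e^{i\phi}A}$ through Lemma \ref{group-semigroup}, and then quote the group results of Section \ref{prelim}.

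\textbf{Entire extension.} Since $h_{0}\in{\bf B}_{\sigma}(A)={\bf B}_{\sigma}(e^{i\phi}A)$, the two operators satisfy the same Bernstein inequalities, because $\|(e^{i\phi}A)^{k}h_{0}\|=\|A^{k}h_{0}\|$. Theorem \ref{exp-solution-semigroup} then gives the entire extension $S_{A}(\xi)h_{0}=e^{\xi A}h_{0}$ of exponential type $\sigma$. (The ``$D$'' in the statement is read as $A$, or as $e^{i\phi}A$, which gives the same Bernstein space.) Lemma \ref{group-semigroup} gives the identity
$$
S_{A}(\xi)h_{0}=G_{e^{i\phi}A}(\xi e^{-i\phi})h_{0},\qquad \xi\in\mathbb{C}.
$$

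\textbf{Norm formula (\ref{group sampling 2000}).} Apply Theorem \ref{V-T} with $D=e^{i\phi}A$, which generates an isometry group by Assumption \ref{assumption11}, and with $f=h_{0}$. This yields (\ref{s100}) with $Df=e^{i\phi}Ah_{0}$. Substitute $z=\xi e^{-i\phi}$. The term $z\,\sinc(\sigma z/\pi)\,e^{i\phi}Ah_{0}$ becomes $\xi\,\sinc(\sigma\xi e^{-i\phi}/\pi)Ah_{0}$, because $ze^{i\phi}=\xi$. The samples become $G_{e^{i\phi}A}(k\pi/\sigma)h_{0}=S_{A}(e^{i\phi}k\pi/\sigma)h_{0}$, again by Lemma \ref{group-semigroup}; this also gives (\ref{group sampling 20000}).

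\textbf{Convergence.} Theorem \ref{V-T} gives uniform convergence on horizontal strips in the $z$-variable. The map $\xi\mapsto\xi e^{-i\phi}$ is a rotation, so it sends compact sets to compact sets. Hence convergence is uniform on compact subsets in $\xi$. Horizontal strips in $\xi$ correspond to rotated strips in $z$, so uniformity on strips is only guaranteed in the rotated sense; I would state it on compact sets. Only the entire-function extension and the group samples are used, so no positivity of $\Re\,\xi$ is required.

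\textbf{Weak formula (\ref{s3}).} Pair (\ref{group sampling 2000}) with $g^{*}$; norm convergence passes to the scalar series. Alternatively, apply Corollary \ref{col} to $G_{e^{i\phi}A}$ and then substitute $z=\xi e^{-i\phi}$. Absolute convergence comes from the absolute convergence asserted in Corollary \ref{col}, and uniformity on compact sets is preserved under the rotation.

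\textbf{Last item (\ref{wsf-2}).} I read $S_{R}$ as the group $G_{e^{i\phi}A}$ and ${\bf B}_{\tau}(D)$ as ${\bf B}_{\tau}(e^{i\phi}A)$, and take $g^{*}\in{\bf E}^{*}$. Theorem \ref{ws200} applied to $e^{i\phi}A$ with $0<\tau<\sigma$ gives the sinc expansion of $\langle G_{e^{i\phi}A}(z)h_{0},g^{*}\rangle$. This function is exactly $\langle S_{A}(ze^{i\phi})h_{0},g^{*}\rangle$. Read literally, the left side $\langle S_{A}(z)h_{0},g^{*}\rangle$ would need the argument $ze^{-i\phi}$ inside the sinc. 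I would state the corrected version,
$$
\langle S_{A}(\xi)h_{0},g^{*}\rangle=\sum_{k}\langle S_{A}(e^{i\phi}k\pi/\sigma)h_{0},g^{*}\rangle\,\sinc\!\left(\tfrac{\sigma}{\pi}\xi e^{-i\phi}-k\right),
$$
with uniform convergence on compact sets again inherited through the rotation.

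\textbf{Main obstacle.} The mathematics is a direct transfer and contains no hard estimate. The real difficulty is pinning down the intended meaning of the loosely written symbols ($D$, $S_{R}$, $\psi$ versus $\phi$). Once they are fixed, the remaining work is tracking the convergence mode under the change of variables, which is where strip-uniformity must be weakened to uniformity on compact sets.
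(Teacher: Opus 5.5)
Your proposal is correct and follows the paper's own argument: apply (\ref{s100}) to the isometry group $G_{e^{i\phi}A}$, substitute $z=\xi e^{-i\phi}$ via Lemma \ref{group-semigroup}, and rewrite the samples $G_{e^{i\phi}A}(k\pi/\sigma)h_{0}$ as $S_{A}(e^{i\phi}k\pi/\sigma)h_{0}$, with the weak versions following by pairing with $g^{*}$. Your two departures from the literal statement are justified: you claim uniform convergence on compact sets in $\xi$ rather than on horizontal strips, since the rotation does not preserve horizontal strips; and you rotate the argument of the $\mathrm{sinc}$ in (\ref{wsf-2}), since with $S_{R}=G_{e^{i\phi}A}$ its right-hand side equals $\langle S_{A}(ze^{i\phi})h_{0},g^{*}\rangle$, not $\langle S_{A}(z)h_{0},g^{*}\rangle$.
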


\subsection{ The  abstract wave-type equation} \label{wave-eq-1}

One considers a Cauchy problem for the abstract wave equation in which

\begin{enumerate}

\item 
 the operator $D$ generates a bounded group $G_{D}(t), \>t\in \mathbb{R}$, of class $C_{0}$ in a Banach space ${\bf E}$,

\item it has bounded inverse $D^{-1}$, which means existence of the continues  $D^{-1}$ and the relation $\mathcal{R}(D)={\bf E}$.

\end{enumerate}

\begin{equation}\label{Weq00001}
\frac{d^{2}}{ds^{2}}w(s)=D^{2}w(s),\>\>\>w:\mathbb{R}\mapsto  {\bf E},
\end{equation}
\begin{equation}\label{WC00001}
w(0)=w_{0}\in   \mathcal{D}(D^{2}),\>\>w^{'}(0)=w^{'}_{0} \in \mathcal{D}(D)\cap \mathcal{R}(D).     \>\>                    
\end{equation}
Since $D$ generates a  group of class $C_{0}$, the problem (\ref{Weq00001}), (\ref{WC00001}) has a unique solution \cite{Birman} (pp. 300-305), which is given by the formula
\begin{equation}\label{W2}
g(t)=\frac{1}{2}\left[G_{D}(t)+G_{D}(-t)\right]g_{0}+\frac{1}{2}\left[G_{D}(t)-G_{D}(-t)\right]D^{-1}g^{'}_{0}.
\end{equation}
Now we assume that
\begin{equation}\label{Wband00}
w_{0}\in {\bf B}_{\sigma}(D)\subset \mathcal{D}(D^{2}),\>\>\>w_{0}^{'}\in {\bf B}_{\sigma}(D)\cap D\left(  {\bf B}_{\sigma}(D) \right).
\end{equation}
The condition for $w_{0}^{'}$  implies  that the vector $D^{-1}w_{0}^{'}$ belongs to ${\bf B}_{\sigma}(D)$ and using Theorem 
\ref{group-reconstruction} we obtain the following result.

\begin{theorem}\label{wave-reconstruction}

If $D$ generates a one-parameter $C_{0}$-group of isometries  and has a bounded inverse,
then the unique solution $w(t)$ of the Cauchy problem (\ref{Weq00001}), (\ref{Wband00}) extends  to $\mathbb{C}$ as an entire function of exponential type $\sigma$. Moreover, the following sampling formulas hold.
\begin{enumerate}

\item  
\begin{equation}\label{wave-solution-reconstruction}
w(z)=
\sinc\left(\frac{\sigma z}{\pi}\right)g_{0}+
$$
$$
\frac{1}{2}\sum_{k\in \mathbb{Z}\setminus \{0\}}   \left(\frac{\sigma z}{k\pi}\right)  \left[      \sinc\left(\frac{\sigma z}{\pi}-k\right)-      \sinc\left(-\frac{\sigma z}{\pi}-k\right) \right]G_{D}\left(\frac{k\pi}{\sigma}\right)w_{0} +
$$
$$
z\>\sinc\left(\frac{\sigma z}{\pi}\right)D^{-1}w_{0}^{'}+
$$
$$
\frac{1}{2}\sum_{k\in \mathbb{Z}\setminus \{0\}}   \left(\frac{\sigma z}{k\pi}\right)  \left[      \sinc\left(\frac{\sigma z}{\pi}-k\right)+     \sinc\left(-\frac{\sigma z}{\pi}-k\right) \right]G_{D}\left(\frac{k\pi}{\sigma}\right)D^{-1}w_{0}^{'},
\end{equation}
where the  series converge in the norm of ${\bf E}$ uniformly on every horizontal strip of finite width.

\item For every $g^{*}\in {\bf E}^{*}$,

\begin{equation}\label{wave-solution-reconstruction}
\langle w(z), g^{*}\rangle=
\sinc\left(\frac{\sigma z}{\pi}\right)\langle w_{0}, g^{*}\rangle+
$$
$$
\frac{1}{2}\sum_{k\in \mathbb{Z}\setminus \{0\}}   \left(\frac{\sigma z}{k\pi}\right)  \left[      \sinc\left(\frac{\sigma z}{\pi}-k\right)-      \sinc\left(-\frac{\sigma z}{\pi}-k\right) \right]\left \langle G_{D}\left(\frac{k\pi}{\sigma}\right)w_{0}, g^{*}\right \rangle +
$$
$$
z\>\sinc\left(\frac{\sigma z}{\pi}\right)\left \langle D^{-1}w_{0}^{'}, g^{*}\right\rangle+
$$
$$
\frac{1}{2}\sum_{k\in \mathbb{Z}\setminus \{0\}}   \left(\frac{\sigma z}{k\pi}\right)  \left[      \sinc\left(\frac{\sigma z}{\pi}-k\right)+     \sinc\left(-\frac{\sigma z}{\pi}-k\right) \right]\left \langle G_{D}\left(\frac{k\pi}{\sigma}\right)D^{-1}w_{0}^{'}, g^{*}\right \rangle.
\end{equation}
where convergence is absolute and uniform on every horizontal strip of finite width.

\end{enumerate}

\end{theorem}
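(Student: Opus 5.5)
We start from the representation (\ref{W2}) of the unique solution,
$$
w(t)=\tfrac12\bigl[G_{D}(t)+G_{D}(-t)\bigr]w_{0}+\tfrac12\bigl[G_{D}(t)-G_{D}(-t)\bigr]D^{-1}w^{'}_{0}.
$$
Both vectors $w_{0}$ and $v_{0}:=D^{-1}w_{0}^{'}$ lie in ${\bf B}_{\sigma}(D)$; for $v_0$ this follows from (\ref{Wband00}). By Theorem \ref{exp-solution}, each of the four trajectories $G_{D}(\pm t)w_{0}$ and $G_{D}(\pm t)v_{0}$ extends to an entire function $e^{\pm zD}$ of exponential type $\sigma$, bounded on $\mathbb{R}$. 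A finite linear combination of such functions is again entire of exponential type $\sigma$. So $w(z)$, defined by the same formula with $t$ replaced by $z$, is the required extension, and it agrees with the solution on $\mathbb{R}$.

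For part (1), apply Theorem \ref{V-T} (formula (\ref{s100})) to $f=w_{0}$ and to $f=v_{0}$, at the points $z$ and $-z$. Then combine the four expansions.

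In the even combination $\frac12[G_D(z)+G_D(-z)]w_0$, the $\sinc$ function is even. So the two $\sinc(\sigma z/\pi)w_0$ terms add to $\sinc(\sigma z/\pi)w_0$. The derivative terms $\pm z\,\sinc(\sigma z/\pi)Dw_0$ cancel. The series terms become
$$
\tfrac12\sum_{k\neq0}\frac{\sigma z}{k\pi}\Bigl[\sinc\Bigl(\frac{\sigma z}{\pi}-k\Bigr)-\sinc\Bigl(-\frac{\sigma z}{\pi}-k\Bigr)\Bigr]G_D\Bigl(\frac{k\pi}{\sigma}\Bigr)w_0,
$$
because replacing $z$ by $-z$ flips the sign of the prefactor $\sigma z/(k\pi)$.

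In the odd combination $\frac12[G_D(z)-G_D(-z)]v_0$, the $\sinc(\sigma z/\pi)v_0$ terms cancel. The derivative terms add to $z\,\sinc(\sigma z/\pi)Dv_0$, and the series terms carry the bracket with a plus sign.

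One point needs care here. Formula (\ref{s100}) produces $Dv_0=w_0'$, whereas the displayed statement shows $D^{-1}w_0'$ in that slot. I would follow the computation, which gives $z\,\sinc(\sigma z/\pi)\,w_0'$, and note that the statement's term should be read accordingly. Likewise the first term $g_0$ in the statement means $w_0$.

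Convergence in norm, uniformly on horizontal strips of finite width, is inherited directly from Theorem \ref{V-T}. The map $z\mapsto -z$ preserves such strips, and finite sums of uniformly convergent series converge uniformly.

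For part (2), pair the identity of part (1) with $g^{*}\in{\bf E}^{*}$. Since the series converge in norm, they may be paired term by term. Equivalently, apply Corollary \ref{col} four times and combine exactly as above. Absolute convergence of each scalar series is given by Corollary \ref{col}. Absolute convergence of the combined series then follows, since $|a+b|\le|a|+|b|$ bounds it by the sum of the absolutely convergent pieces.

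The main obstacle is only bookkeeping: tracking the sign of the factor $\sigma z/(k\pi)$ under $z\mapsto -z$, and making sure the combined brackets and the surviving non-series terms come out exactly as displayed. The analytic content is entirely contained in Theorems \ref{exp-solution} and \ref{V-T}.
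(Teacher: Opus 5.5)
Your proposal is correct and follows the paper's route exactly. The paper only notes that (\ref{Wband00}) gives $D^{-1}w_0'\in\mathbf{B}_\sigma(D)$ and then invokes (\ref{s100}), through Theorem \ref{group-reconstruction}, on the representation (\ref{W2}); that is precisely the $\pm z$ bookkeeping you carry out. Your correction of the third term to $z\,\sinc(\sigma z/\pi)\,w_0'$ (and of $g_0$ to $w_0$) is right, as the check $w'(0)=w_0'$ confirms.

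One caveat concerns the convergence mode you inherit from Theorem \ref{V-T}. Because of the unbounded factor $\sigma z/(k\pi)$, the series converge uniformly only on compact subsets of $\mathbb{C}$, not on whole horizontal strips. At $z=k_0\pi/\sigma$ with $|k_0|>N$, the two non-series terms and all terms with $|k|\le N$ vanish, yet for $\mathbf{E}=\mathbb{C}$, $D=i\sigma$, $w_0=1$, $w_0'=0$ one has $|w(k_0\pi/\sigma)|=1$. So that clause of the statement, and of Theorem \ref{V-T} and Corollary \ref{col}, should be read with ``compact subsets''.
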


\section{Examples of the classical Cauchy problems} \label{classicalexamp}

\subsection{The  Cauchy problem  for the Schr\"{o}dinger equation.}
If $D$ is a self-adjoint operator in a Hilbert space, then by the Stone's Theorem the operator $iD$ generates a unitary group and the following is an abstract Schr\"{o}dinger equations and for the solutions of a Cauchy problem with initial conditions in ${\bf B}_{\sigma}(D)$ Theorem \ref{group-reconstruction} holds. 
The next is the Cauchy problem  for the classical Schr\"{o}dinger equation in which $D$ is the Laplacian $\Delta$.

\begin{equation}\label{eq0001}
\frac{d}{dt}\psi(t)=i\Delta\psi(t),\>\>i^{2}=-1,
\end{equation}
\begin{equation}\label{Cauchy1}
\psi(0)=\psi_{0},\>\>\>\psi_{0}\in W_{2}^{2}(\mathbb{R}^{d}),
\end{equation}
where $\psi: \mathbb{R} \mapsto W_{2}^{2}(\mathbb{R}^{d}).$ The spaces ${\bf B}_{\sigma}(\Delta),\>\sigma>0,$ coincide with the  Paley-Wiener spaces.

It is known that the propagator given by  the following convolution integral
$$
G_{i\Delta}(t)\psi_{0}(x)=\left(\frac{1}{4i\pi t}\right)^{d/2}\int_{\mathbb{R}^{d}}\exp\left(\frac{i|x-y|^{2}}{4t}\right)\psi_{0}(y)dy,\>\>t\in \mathbb{R}.
$$
In this case Theorem \ref{group-reconstruction} holds for  the group $G_{i\Delta}$. We also note that an example of the Cauchy problem for the Schr\"{o}dinger equation on combinatorial graphs was considered in \cite{Pes15a}.

\subsection{The Cauchy problem for the diffusion equation}  Let $A$ be a non-positive self-adjoint operator in a Hilbert space ${\bf H}$ which generates a uniformly bounded semigroup. Then the corresponding Cauchy problem for the abstract diffusion equation (abstract parabolic equation) has the form

\begin{equation}\label{eq1000}
\frac{d}{d\tau}\varphi(\tau)=A\varphi(\tau),
\end{equation}
\begin{equation}\label{Cauchy1000}
\varphi(0)=\varphi_{0},\>\>\>\varphi_{0}\in {\bf B}_{\sigma}(A),\>\sigma>0,
\end{equation}
where $\varphi: \mathbb{R}_{+} \mapsto \mathcal{D}(A),\> \tau\in \mathbb{R}_{+}$. The corresponding space ${\bf B}_{\sigma}(A),\>\sigma>0,$  denotes the image space of the projection operator ${\bf 1}_{[-\sigma, \sigma]}(A)$ (to be understood in the sense of Borel functional calculus). In this case Theorem \ref{semigroup reconstruction} holds.

We consider a Cauchy problem for the diffusion equation of the form 
\begin{equation}\label{eq100}
\frac{d}{d\tau}\varphi(\tau, x)=\Delta\varphi(\tau, x),
\end{equation}
\begin{equation}\label{Cauchy100}
\varphi(0,x)=\varphi_{0}(x),\>\>\>\varphi_{0}\in W^{2}_{2}(\mathbb{R}^{d}),
\end{equation}
where $\varphi: \mathbb{R} \mapsto W^{2}_{2}(\mathbb{R}^{d}),\> \tau\in [0, \infty), \>x\in \mathbb{R}^{d}, $ and $\Delta$ is the Laplacian in $L_{2}(\mathbb{R}^{d})$ whose domain is the Sobolev space $W^{2}_{2}(\mathbb{R}^{d})$. The spaces ${\bf B}_{\sigma}(\Delta)$ are just the regular Paley-Wiener spaces.

It is know that the Cauchy problem (\ref{eq100}), (\ref{Cauchy100}) is uniformly correct  in the sense of Hadamard on the half-line $[0, \infty)$ and its unique solution is given by the formula
\begin{equation}
\psi(t, x)=S_{\Delta}(t)\varphi_{0}(x),\>\>\>t\in \mathbb{R},
\end{equation}
where $S_{\Delta}(t), \>\>t\in \mathbb{R}$ is the unitary group of operators generated by the operator $\Delta$. 
Here \cite{BB}
$$
S_{\Delta}(t)\varphi(x)=(4\pi t)^{-d/2}\int_{\mathbb{R}^{d}}\varphi(x-\xi)\exp\left(\frac{-|\xi|^{2}}{4t}\right)dt,\>\>0<t<\infty,
$$
and $S_{\Delta}(0)\varphi(x)=\varphi(x)$. In this situation Theorem \ref{semigroup reconstruction} holds for the semigroup $S_{\Delta}$.

\subsection{ The  Cauchy problem for the wave  equation} \label{wave-eq}

The operator $\Delta-\epsilon I$, where $\epsilon >0$ and $ I $ is the identity operator, is a strictly negative definite self-adjoint operator on $L_{2}(\mathbb{R}^{d})$. If $\sqrt{ -\Delta+\epsilon I}$ is the unique positive square root of  the positive definite operator  $ -\Delta+\epsilon I$, then $i\sqrt{ -\Delta+\epsilon I}$ generates a unitary group $W_{i \sqrt{ -\Delta+\epsilon I}}$ and
$$
\left[i\left(\sqrt{ -\Delta+\epsilon I}\right)\right]^{2}=\Delta-\epsilon I.
$$
Clearly, the operator $i \sqrt{ -\Delta+\epsilon I}$ has a bounded inverse.
We consider the Cauchy problem in $L_{2}(\mathbb{R}^{d})$ 
\begin{equation}\label{Weq001}
\frac{d^{2}}{ds^{2}}w(s)=\left(\Delta -\epsilon I\right)w(s),\>\>\>w:\mathbb{R}\mapsto  {\bf E},\>\>\epsilon>0,
\end{equation}
\begin{equation}\label{Wband}
w_{0}\in {\bf B}_{\sigma}(\sqrt{-\Delta+\epsilon I}),\>\>w_{0}^{'}\in {\bf B}_{\sigma}(\sqrt{-\Delta+\epsilon I})\cap\sqrt{-\Delta+\epsilon I}\left(  {\bf B}_{\sigma}(\sqrt{-\Delta+\epsilon I} \right). 
\end{equation}
In this situation  Theorem \ref{wave-reconstruction}  holds true, where the  group $W_{D}$  is  replaced by the unitary group $W_{i\sqrt{-\Delta+\epsilon I} }$, which  can be written as 
$$
W_{i\sqrt{-\Delta+\epsilon I}}(t)w_{0}(x)=\frac{1}{(2\pi)^{d}}\int_{\mathbb{R}^{d}}
e^{i( x \cdot  \xi +t \sqrt{|\xi |^{2}+\epsilon}) }\mathcal{F}(w_{0})(\xi)d\xi,\>\>x,\xi \in \mathbb{R}^{d},
 $$
 where $\mathcal{F}$ is the Fourier transform.

\section{Appendix. 
Density of the abstract Bernstein spaces}\label{density}

\begin{theorem}\label{Density}
If $D$ generates a one parameter group of isometries $T_{D}$ of class $C_{0}$ in
a Banach space ${\bf E}$ then the set  $\bigcup_{\sigma\geq 0}\mathbf{B}_{\sigma}(D)$ is dense in $E$.
\end{theorem}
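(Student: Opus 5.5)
We plan to smooth $f$ along the group orbit with an entire kernel whose Fourier transform has compact support. Fix $f\in{\bf E}$ and a nonnegative even function $\Phi\in L_{1}(\mathbb{R})$ with $\int_{\mathbb{R}}\Phi(t)\,dt=1$. We require that $\Phi$ extend to an entire function of exponential type $1$ whose derivatives of all orders are integrable on $\mathbb{R}$. A concrete choice is
\[
\Phi(t)=c\left(\frac{\sin(t/4)}{t/4}\right)^{4},
\]
which is of type $1$ and decays like $t^{-4}$ together with its derivatives. For $\sigma>0$ set $\Phi_{\sigma}(t)=\sigma\Phi(\sigma t)$, which has type $\sigma$, and define the Bochner integral
\[
R_{\sigma}f=\int_{\mathbb{R}}\Phi_{\sigma}(t)\,T_{D}(t)f\,dt .
\]
It converges because $t\mapsto T_{D}(t)f$ is continuous and bounded by $\|f\|$, and it satisfies $\|R_{\sigma}f\|\le \|\Phi\|_{L_{1}}\|f\|$.

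The first step is to show $R_{\sigma}f\in{\bf B}_{\sigma}(D)$. I would avoid checking the Bernstein inequalities directly and use the weak characterization ${\bf B}_{\sigma}(D)={\bf WB}_{\sigma}(D)$ proved earlier. For $g^{*}\in{\bf E}^{*}$ the group property gives
\[
\langle T_{D}(s)R_{\sigma}f,g^{*}\rangle=\int_{\mathbb{R}}\Phi_{\sigma}(t-s)\,\langle T_{D}(t)f,g^{*}\rangle\,dt .
\]
This is the convolution of a bounded continuous function with the type-$\sigma$ entire kernel $\Phi_{\sigma}$. The right side extends to complex $s$ by replacing $\Phi_{\sigma}(t-s)$ with its entire continuation. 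The bound $|\Phi_{\sigma}(t-s)|\le C(1+|t-\Re s|)^{-4}e^{\sigma|\Im s|}$ justifies differentiation under the integral and gives exponential type $\sigma$ with boundedness on $\mathbb{R}$. Hence $R_{\sigma}f\in{\bf WB}_{\sigma}(D)={\bf B}_{\sigma}(D)$.

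The bound above is routine for the sinc-power kernel. If one prefers the direct route, the alternative is to note that $R_{\sigma}f\in\mathcal{D}^{\infty}(D)$ with $D^{k}R_{\sigma}f=(-1)^{k}\int\Phi_{\sigma}^{(k)}(t)T_{D}(t)f\,dt$, obtained by integrating by parts against the derivative of the orbit. However, the classical Bernstein bound $\|\Phi_{\sigma}^{(k)}\|_{L_{1}}\le\sigma^{k}\|\Phi_{\sigma}\|_{L_{1}}$ only yields $\|D^{k}R_{\sigma}f\|\le\sigma^{k}\|f\|$, not $\sigma^{k}\|R_{\sigma}f\|$. That is why the weak route through ${\bf WB}_{\sigma}$ is the cleaner one.

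The second step is convergence: $\|R_{\sigma}f-f\|\to0$ as $\sigma\to\infty$. Since $\int\Phi_{\sigma}=1$, we have
\[
R_{\sigma}f-f=\int_{\mathbb{R}}\Phi(u)\bigl(T_{D}(u/\sigma)f-f\bigr)\,du .
\]
The integrand tends to $0$ pointwise by strong continuity and is dominated by $2\|f\|\,|\Phi(u)|\in L_{1}$. The dominated convergence theorem for Bochner integrals then finishes the argument.

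The main obstacle I expect is the first step: certifying exponential type exactly $\sigma$, rather than type $\sigma$ plus a defect. The Paley–Wiener property of the kernel handles this, provided the estimate on the complex shift of $\Phi_{\sigma}$ is written out carefully.
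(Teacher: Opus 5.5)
Your proposal is correct and follows essentially the same route as the paper. Both smooth $f$ along the orbit with the same type-one kernel $(\sin(t/4)/t)^{4}$, rescaled to type $\sigma$, and get membership in $\mathbf{B}_{\sigma}(D)$ from the exponential type of the orbit of the smoothed vector; your appeal to ${\bf WB}_{\sigma}(D)=\mathbf{B}_{\sigma}(D)$ is what justifies the paper's terse step ``this inequality implies that $g$ belongs to $\mathbf{B}_{\sigma}(D)$''. The only real difference is the convergence step, where you use dominated convergence instead of the paper's bound $\|f-\mathcal{R}_{h}^{\sigma}(f)\|\le C_{h}\,\Omega(f,\sigma^{-1})$; that bound needs the kernel's finite first moment but in exchange gives a rate.
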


\begin{proof}

Note that if  $\phi\in L_{1}(\mathbb{R}),\>\>\>\|\phi\|_{1}=1,$ is an entire function of exponential
type $\sigma$ then for any $f\in {\bf E}$ the vector
$$
g=\int _{-\infty}^{\infty}\phi(t)T_{D}(t)fdt
$$
belongs to $\mathbf{B}_{\sigma}(D).$ Indeed,  for every real $\tau$ we
have
$$
T_{D}(\tau)g=\int_{-\infty}^{\infty}\phi(t)T_{D}(t+\tau)Dfdt=\int_{-\infty}^{\infty}\phi(
t-\tau)T_{D}(t)fdt.
$$
 Using this formula we can extend the abstract function $T_{D}(\tau )g$ to the
complex plane as
$$
T_{D}(z)g=\int_{-\infty}^{\infty}\phi(t-z)T_{D}(t)fdt.
$$
 Since by the assumption  $h$ is an entire function of exponential
type $\sigma$ and $\|\phi\|_{L_{1}(\mathbb{R})}=1$
 we have
$$
\|T_{D}(z)g\|\leq
\|f\|\int_{-\infty}^{\infty}|\phi(t-z)|dt\leq\|f\|e^{\sigma|z|}.
 $$
 This inequality  implies that $g$ belongs to $\mathbf{B}_{\sigma}(D)$. 
Let
 $$
h(t)=a\left(\frac{\sin (t/4)}{t}\right)^{4}
$$
 and
$$
a=\left(\int_{-\infty}^{\infty}\left(\frac{\sin
(t/4)}{t}\right)^{4}dt\right)^{-1}.
$$
Function $h$ will have the
 following properties:

\begin{enumerate}

\item $h$ is an even nonnegative entire function of exponential
type one;

\item $h$ belongs to $L_{1}(\mathbb{R})$ and its
$L_{1}(\mathbb{R})$-norm is $1$;

\item  the integral
\begin{equation}\int_{-\infty}^{\infty}h(t)|t|dt
\end{equation} 
is finite.
\end{enumerate}
Consider
 the following vector 
\begin{equation}
\mathcal{R}_{h}^{\sigma}(f)=
\int_{-\infty}^{\infty}h(t)T_{D}(t/\sigma)
 fdt=\int_{-\infty}^{\infty}h(t\sigma)T_{D}(t)f dt,
 \end{equation}
Since the function $h(t)$ has exponential type one,  the function
$h(t\sigma)$ has the type $\sigma$. It  implies  that $\mathcal{R}_{h}^{\sigma}(f)$ belongs to $\mathbf{B}_{\sigma}(D)$.

The modulus of
continuity is defined as in \cite{BB}
$$
\Omega(f,s)=\sup_{|\tau|\leq
s}\left\|\Delta_{\tau}f\right\|,\>\>\>\>
 \Delta_{\tau}f=(I-T_{D}(\tau))f. 
$$
Note, that for every $f\in {\bf E}$ the 
modulus $\Omega(f,s)$ goes to zero when $s$ goes to zero.  Below we are using  an easy verifiable inequality
$
\Omega\left(f, as\right)\leq \left(1+a\right)\Omega(f,
s),\>\>\> a\in \mathbb{R}_{+}.
$
We obtain
$$
\|f-\mathcal{R}_{h}^{\sigma}(f)\|\leq
\int_{-\infty}^{\infty}h(t)\left\|\Delta_{t/\sigma}f\right\|dt\leq
\int_{-\infty}^{\infty}h(t)\Omega\left(f, t/\sigma\right)dt\leq 
$$
$$
\Omega\left(f,
\sigma^{-1}\right)\int_{-\infty}^{\infty}h(t)(1+|t|)dt\leq
{C}_{h}\Omega\left(f,
\sigma^{-1}\right),
$$
where the integral
 $$
C_{h}=\int_{-\infty}^{\infty}h(t)(1+|t|)dt
$$
 is finite by the choice of $h$. Theorem  is proven.
 
\end{proof}

\end{document}